\DeclareRobustCommand\slab{s}
\DeclareRobustCommand\nulab{\nu}
\theoremstyle{plain}
\newtheorem{theorem}{Theorem}[section]
\newtheorem{lemma}[theorem]{Lemma}
\newtheorem{proposition}[theorem]{Proposition}
\newtheorem{corollary}[theorem]{Corollary}
\theoremstyle{definition}
\newtheorem{remark}[theorem]{Remark}
\DeclareRobustCommand\slab{s}
\DeclareRobustCommand\nulab{\nu}
\newcommand{\N}{\mathbb{N}}
\newcommand{\R}{\mathbb{R}}
\newcommand{\K}{\mathbb{K}}
\newcommand{\bs}{\boldsymbol}
\newcommand{\eps}{\varepsilon}
\newcommand{\lraup}{\relbar\joinrel\rightharpoonup}
\newcommand{\bsl}{\pmb{\langle}}
\newcommand{\bsr}{\pmb{\rangle}}
\newcommand{\opL}{\mathscr{L}}
\newcommand{\ked}{\widehat{k}_{\eps}}
\def\MR#1{\href{http://www.ams.org/mathscinet-getitem?mr=#1}{MR#1}}
\numberwithin{equation}{section}
\begin{document}
	
	\title[]{Nonlocal Lagrange multipliers and transport densities} 
	
	\author[]{Assis Azevedo}
	\address{CMAT and Departamento de Matem\'atica, Escola de Ci\^encias, Universidade do Minho, Campus de Gualtar, 4710-057 Braga, Portugal}
	\email{assis@math.uminho.pt}	
	\author[]{Jos\'e Francisco Rodrigues}
	\address{CMAFcIO -- Departamento de Matem\'atica, Faculdade de Ci\^encias, Universidade de Lisboa
		P-1749-016 Lisboa, Portugal}
	\email{jfrodrigues@ciencias.ulisboa.pt}
	\author[]{Lisa Santos} 
	\address{CMAT and Departamento de Matem\'atica, Escola de Ci\^encias, Universidade do Minho, Campus de Gualtar, 4710-057 Braga, Portugal}
	\email{lisa@math.uminho.pt}
	%------
	\subjclass[2020]{35D30, 35R11, 49J27}
	%[2000]{53C50, 57R25}
	
	%------
	% Add a list of keywords.
	%------
	\keywords{Fractional gradient, nonlocal variational inequalities, Lagrange multipliers}
	
	\begin{abstract}	
		We prove the existence of generalised solutions of the Monge-Kantorovich equations  with fractional $s$-gradient constraint, $0<s<1$, associated to a general, possibly degenerate, linear fractional operator of the type, 
		\begin{equation*}
			\mathscr L^su=-D^s\cdot(AD^su+\bs bu)+\bs d\cdot D^su+cu ,
		\end{equation*}
		with integrable data, in the space $\Lambda^{s,p}_0(\Omega)$, which is the completion of the set of smooth functions with compact support in a bounded domain $\Omega$ for the $L^p$-norm of the distributional Riesz fractional gradient $D^s$ in $\R^d$ (when $s=1$, $D^1=D$ is the classical gradient). The transport densities arise as generalised Lagrange multipliers in the dual space of $L^\infty(\R^d)$ and are associated to the variational inequalities of the corresponding transport potentials under the constraint $|D^su|\leq g$. Their existence is shown by approximating the variational inequality through a penalisation of the constraint and nonlinear regularisation of the linear operator $\mathscr L^su$. For this purpose, we also develop some relevant properties of the spaces $\Lambda^{s,p}_0(\Omega)$, including the limit case $p=\infty$ and the continuous embeddings $\Lambda^{s,q}_0(\Omega)\subset \Lambda^{s,p}_0(\Omega)$, for $1\le p\le q\le\infty$. We also show the localisation of the nonlocal problems ($0<s<1$), to the local limit problem with classical gradient constraint when $s\rightarrow1$, for which most results are also new for a general, possibly degenerate, partial differential operator $\mathscr L^1u$ only with integrable coefficients and bounded gradient constraint.
	\end{abstract}
	
	\maketitle
	
	%------
	% INSERT THE BODY OF THE PAPER HERE (except
	% acknowledgments, funding info and bibliography)
	%------

	\section{Introduction}	
	In a bounded open set $\Omega$ of $\R^d$, consider the  model problem for the pair of functions $(u,\lambda)$,
	\begin{align}
		&	-D\cdot\big((\delta+\lambda)Du\big)=f\ \text{ in $\Omega$}, \ u=0\ \text{ on } \partial\Omega\label{Int_1} \\
		&|Du|\le 1,\quad\lambda\ge0,\quad \lambda(|Du|-1)=0\ \text{ in $\Omega$,}\label{Int_2}
	\end{align}
	where $\delta\geq 0$ is a constant, $D$ denotes the gradient, $D\cdot$ denotes the divergence and $f=f(x)$ is a given function.
	
	For $\delta>0$, the problem \eqref{Int_1}--\eqref{Int_2}, being equivalent to minimise the functional
	\begin{equation}
		\label{Int_3}
		u\mapsto\tfrac{\delta}{2}\int_\Omega|Du|^2-\int_\Omega fu 
	\end{equation}
	in the convex subset of $H^1_0(\Omega)$ subjected to the constraint $|Du|\leq 1$ in $\Omega$, is well-known to model the elastoplastic torsion of a cylindric bar of cross section $\Omega$, where $\lambda$ is the respective Lagrange multiplier. In $1972$, Br\'ezis \cite{Brezis72} has shown that, if $f=const>0$ and $\Omega$ is simply connected, $\lambda\in L^\infty(\Omega)$ is unique and even continuous if $\Omega$ is convex. This was partially extended to more general strictly convex functionals than \eqref{Int_3}, by Chiad\`o Piat and Percivale \cite{ChiadoPiatPercivale1994}, for $f\in L^p(\Omega)$, $p>d$, obtaining a solution $u$ in $C^{1,\alpha}(\overline{\Omega})$, $\alpha=1-\tfrac{d}{p}$ and $\lambda$ as a positive Radon measure (see the survey \cite{RodriguesSantos2019}, for references and more results).
	
	In the degenerate case $\delta=0$, \eqref{Int_1}--\eqref{Int_2} are usually called the Monge-Kantorovich equations, as they appear in a classical mass transfer problem \cite{EG}, where $u$ and $\lambda$ represent the transport potential and density, respectively. This is the dual problem of \eqref{Int_3} with $\delta=0$ over all Lipschitz continuous functions with $|Du|\leq 1$ and vanishing on $\partial\Omega$. This same problem also arises in shape optimization \cite{BBS},  in the equilibrium configurations \cite{BB22} and in the time discretisation of the growing sandpile problem \cite{DI}. 
	
	In general, and specially in the case $\delta=0$ with more general gradients thresholds, the main difficulty in studying  \eqref{Int_1}--\eqref{Int_2}  is the non-regularity of the flux, since $Du$ is just bounded and it can not be multiplied by
	$\lambda$, whenever this is a Radon measure. Several approaches have been proposed, by relaxing the Monge-Kantorovich problem (see \cite{BBS}, \cite{I2009} or \cite{BBdp}).
	
	A different and more direct approach was proposed by \cite{AzevedoSantos2017} to solve \eqref{Int_1}--\eqref{Int_2} with $\delta\geq 0$, $f\in L^2(\Omega)$ and a variable general constraint $|Du|\leq g\in L^\infty(\Omega)$, with $g>0$, by proving the existence of a pair $(u,\lambda)\in W^{1,\infty}(\Omega)\times L^\infty(\Omega)'$.
	The generalised Lagrange multiplier $\lambda$ being a charge, i.e., an element of $ L^\infty(\Omega)'$, allows to interpret the equation \eqref{Int_1} in a duality sense and the second and third conditions of \eqref{Int_2} (with $1$ replaced by $g$) in the dual space $L^\infty(\Omega)'$.
	
	Recently, this charges approach was extended in \cite{AzevedoRodriguesSantos2022} to a class of coercive nonlocal problems considered in  \cite{RodriguesSantos2019} with fractional gradient constraint of the type
	\begin{equation}
		\label{Int_4}
		|D^su|\leq g, \quad 0<s<1,
	\end{equation}
	where $D^s$ is the distributional fractional Riesz gradient. The fractional $s$-gradient $D^s$ has been recently studied by several authors \cite{ShiehSpector2015}, \cite{S2020}, \cite{ComiStefani2019}, \cite{ComiStefani2019_2}. It may be defined via smooth functions $C^\infty_c(\R^d)$ by the convolution of the classical gradient with the Riesz kernel $I_{1-s}$, i.e., $D^su=I_{1-s}*Du=D(I_{1-s}*u)$, with the nice properties $(-\Delta)^su=-D^s\cdot (D^su)$ and
	\begin{equation}
		\label{Int_5}\int_{\R^d}u D^s\cdot\bs\xi=-\int_{\R^d} D^su\cdot\bs\xi,\quad\forall \bs\xi\in C^\infty_c(\R^d)^d,
	\end{equation}
	where $D^s\cdot$ denotes the $s$-divergence and $(-\Delta)^s$ the fractional $s$-Laplacian. For smooth functions with compact support $D^s$ can also be equivalently defined by a vector-valued fractional singular integral, which satisfies elementary physical requirements, such as translational and rotational invariances, homogeneity of degree $s$ under isotropic scaling and certain basic continuity properties \cite{S2020}, in order to model long-range forces and nonlocal effects in continuum mechanics.
	
	Another important property of $D^s$ is due to the fact that the Riesz kernel $I_{1-s}$ approaches the identity operator as $s\rightarrow 1$, which implies that $D^su\longrightarrow Du$ in $L^p$-spaces, provided $Du\in \bs L^p(\R^d)=L^p(\R^d)^d$ (see Section \ref{section2}, for details). However it should be noted that even when $u$ has compact support in $\R^d$ and $D^su$ makes sense as a $p$-integrable function, in general, $D^su$ has not compact support in contrast with $Du=D^1u$.
	
	Here we shall be concerned with the more general fractional Monge-Kantorovich-type problem for a function $u$, satisfying $u=0$ in $\R^d\setminus\Omega$, and a charge $\lambda$, such that
	\refstepcounter{equation}
	\begin{align}
		&\mathscr L^su-D^s\cdot(\lambda D^su)=f-D^s\cdot \bs f
		\label{Int_6}	\tag*{(\theequation)$_{\slab}$}\\
		&			\refstepcounter{equation}
		|D^su|\le g_s,\quad\lambda\ge0\quad \text{ and }\quad \lambda(|D^su|-g_s)=0. \label{Int_7}	\tag*{(\theequation)$_{\slab}$}
	\end{align}
	
	For a bounded positive threshold $g_s$, the first condition in \ref{Int_7} holds a.e. $x\in\R^d$, for $0<s<1$, and a.e. in $\Omega$, for $s=1$, while the second and third ones are interpreted in $L^\infty(\R^d)'$ and in $L^\infty(\Omega)'$, respectively.
	
	The equation \ref{Int_6} must be interpreted in an appropriate functional space duality with the bilinear form associated to a linear operator for $0<s\leq 1$, possibly degenerate, in the general form:
	\refstepcounter{equation}\begin{equation} \mathscr L^su=-D^s\cdot(AD^su+\bs bu)+\bs d\cdot D^su+cu
		\label{Int_8}	\tag*{(\theequation)$_{\slab}$},
	\end{equation}
	where the nonnegative matrix $A=A(x)$ has integrable coefficients, which may degenerate or even vanish completely, the vector fields $\bs b$ and $\bs d$, as well as the function $c$ and the given data $f$ and $\bs f$ are also merely integrable in the case of bounded $g_s$, even in the classical local case $s=1$.
	
	The fractional setting for the homogeneous Dirichlet condition is considered within the functional framework of the following family of Banach spaces
	\begin{equation}
		\label{Int_9}
		\Lambda^{s,q}_0(\Omega)\subset \Lambda^{s,p}_0(\Omega),\quad 1\leq p\leq q\leq \infty, \ 0<s<1,
	\end{equation}
	where $\Lambda^{s,2}_0(\Omega)$ are the usual fractional Sobolev spaces $H^s_0(\Omega)$ and the limit case $s=1$ corresponds to the usual Sobolev spaces $H^1_0(\Omega)$ and  $W^{1,p}_0(\Omega)$ if $p\neq 2$. For $1\leq p<\infty$, $\Lambda^{s,p}_0(\Omega)$ is the closure of $C^\infty_0(\Omega)$ for the norm $\|D^su\|_{\bs L^p(\R^d)}$ (see Section \ref{section2}).
	
	We observe that, for $p\neq 2$, $0<s<1$, the Lions-Cald\'eron spaces $\Lambda^{s,p}_0(\Omega)$ are different from the Sobolev-Slobodeckij spaces $W^{s,p}_0(\Omega)$, although they are contiguous (see \cite[p 219]{Adams1975} or \cite{Campos2021}), i.e. 
	\begin{equation*}
		\Lambda^{s+\eps,p}_0(\Omega)\subsetneqq W^{s,p}_0(\Omega)\subsetneqq\Lambda^{s-\eps,p}_0(\Omega),\quad s>\eps>0, \ 1<p<\infty,\ p\neq 2.
	\end{equation*}
	
	The paper is organised as follows: in Section \ref{section2} we develop the required functional framework for the Riesz fractional derivatives and we recall and prove some interesting properties of the spaces $\Lambda^{s,p}_0(\Omega)$, including \eqref{Int_9}; in Section \ref{section3}, we precise the assumptions on $\mathscr L^s$, which may be a degenerate operator, and we prove the existence of a solution to the corresponding pseudo-monotone variational inequality with the convex set of the $s$-gradient constraint \eqref{Int_4} in $H^s_0(\Omega)$ and in $\Lambda^{s,\infty}_0(\Omega)$ for nonnegative threshold $g\in L^2_{loc}(\R^d)$ and $g\in L^\infty_{loc}(\R^d)$, respectively. We also give sufficient conditions for the operator $\mathscr L^s$ to be strictly coercive in $H^s_0(\Omega)$ and, as a consequence, we extend the strong continuous dependence (and the uniqueness) of the transport potential $u$ with respect to the data, including the continuous dependence on the $s$-gradient thresholds.
	
	Our main results are in Section \ref{section4}, where we prove the existence of a generalised transport potential-density pair solving the Monge-Kantorovich equations \ref{Int_6} and \ref{Int_7} under rather general conditions on the operator $\mathscr L^s$, including the $L^1$ integrability of its coefficients. The proof is based on a new generalised weak continuous dependence on the pair $(u, \lambda)$ with respect not only on the coefficients of  $\mathscr L^s$ and on the data $f$, $\bs f$ (in $L^1$) but also on the threshold $g$ (in $L^\infty$) and on the solvability and {\em a  priori} estimates of a suitable family of approximation problems in the space $ \Lambda^{s,q}_0(\Omega)$, for a large  finite $q$, with a penalisation of the $s$-gradient and with a nonlinear regularisation of $q$-power type of the possible degenerate operator $\mathscr L^s$.
	Finally in Section \ref{section5} we extend the weak convergence on the generalised localisation of the transport potentials and densities as the fractional parameter $s\rightarrow 1$, improving the result of \cite{AzevedoRodriguesSantos2022}.
	In Sections 4 and 5, we work with generalised sequences, also called  nets, see for instance \cite{KantorovichAkilov1982}.

	\section{The functional framework}\label{section2}

	Following \cite{ShiehSpector2015} we recall that the fractional gradient of order $s\in (0,1)$, denoted by $D^s=\big(D^s_1,\ldots,D^s_d\big)$, may be defined in the distributional sense by 
	\begin{equation*} 
		D^s u= D(I_{1-s}u) 
	\end{equation*}
	for any function $u\in L^p(\R^d)$, $1<p<\infty$, such that the Riesz potential $I_{1-s}u=I_{1-s}* u$ is locally integrable, i.e., for each $i=1,\ldots,d$: 
	\begin{equation}\label{Ds}
		\big\langle D^s_iu, \varphi\big\rangle=-
		\big\langle I_{1-s}*u,D_i\varphi  \big\rangle=
		\int_{\R^d}(I_{1-s}*u)D_i\varphi, \qquad \forall \varphi\in C^\infty_c(\R^d).
	\end{equation}
	
	The Riesz kernel of order $\alpha\in(0,1)$, for $x\in\R^d\setminus\{0\}$, is given by
	\begin{equation*}
		I_\alpha(x)=\frac{\gamma_{d,\alpha}}{|x|^{d-\alpha}},\qquad\text{with }\gamma_{d,\alpha}=\frac{\Gamma(\frac{d-\alpha}2)}{\pi^\frac{d}22^\alpha\Gamma(\frac{\alpha}{2})},
	\end{equation*}
	and it satisfies the following well-known properties which proof is reproduced for completeness. 
	
	We fix the notation $B(x,r)$ for the open ball centered at $x\in\R^d$ and radius $r>0$.
	
	\begin{lemma} \label{I_II_II} Let $I_\alpha$ be the Riesz kernel, $0<\alpha<1$, $p\in(1,\infty)$ and $R>0$. Then, denoting by $\sigma_{d-1}$ the surface area of the unit sphere in $\R^d$, we have:
		\begin{enumerate}
			\item[(i)] $\|I_\alpha\|_{L^1(B(0,R))}=\sigma_{d-1}\frac{\gamma_{d,\alpha}}{\alpha}R^\alpha$;		
			\item[(ii)] If $\alpha p<d$ then $\|I_\alpha\|_{L^{p'}(\R^d\setminus B(0,R))}=\gamma_{d,\alpha}\Big(\sigma_{d-1}\frac{p-1}{d-\alpha p}\Big)^{\frac{1}{p'}}R^\frac{\alpha p-d}{p}$.
		\end{enumerate}
		
		As a consequence $\displaystyle\lim_{\alpha\rightarrow 0}\|I_\alpha\|_{L^1(B(0,R))}=1$ and $\displaystyle\lim_{\alpha\rightarrow 0}\|I_\alpha\|_{L^{p'}(\R^d\setminus B(0,R))}=0$.
	\end{lemma}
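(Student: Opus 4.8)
The plan is to compute both norms directly in polar coordinates, using that the Riesz kernel is radial and strictly positive, so the absolute values in the $L^1$- and $L^{p'}$-norms play no role.

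For (i) I would write $\|I_\alpha\|_{L^1(B(0,R))}=\gamma_{d,\alpha}\int_{B(0,R)}|x|^{\alpha-d}\,dx$ and pass to spherical coordinates: the angular part contributes the factor $\sigma_{d-1}$, and the radial integral is $\int_0^R r^{\alpha-d}\,r^{d-1}\,dr=\int_0^R r^{\alpha-1}\,dr=R^{\alpha}/\alpha$, which is finite precisely because $\alpha>0$. This gives the stated identity. For (ii) the same change of variables yields $\|I_\alpha\|_{L^{p'}(\R^d\setminus B(0,R))}^{p'}=\gamma_{d,\alpha}^{p'}\,\sigma_{d-1}\int_R^\infty r^{(\alpha-d)p'+d-1}\,dr$. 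The point is that the exponent $(\alpha-d)p'+d-1$ is $<-1$ if and only if $(\alpha-d)p'<-d$, i.e. $\alpha<d/p'=d(1-\tfrac1p)$, which is exactly the hypothesis $\alpha p<d$; under it the integral equals $R^{(\alpha-d)p'+d}/\big((d-\alpha)p'-d\big)$, and a short manipulation with $p'=p/(p-1)$ gives $(d-\alpha)p'-d=(d-\alpha p)/(p-1)$. Taking $p'$-th roots and simplifying the exponent via $\frac{(\alpha-d)p'+d}{p'}=\alpha-\frac{d}{p}=\frac{\alpha p-d}{p}$ produces the claimed formula.

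For the limits as $\alpha\to0$, the only ingredient beyond the formulas just proved is the asymptotics of $\gamma_{d,\alpha}$. Since $\Gamma(z)\sim 1/z$ as $z\to0$, one has $\Gamma(\alpha/2)\sim 2/\alpha$, hence $\gamma_{d,\alpha}=\dfrac{\Gamma\!\left(\frac{d-\alpha}{2}\right)}{\pi^{d/2}\,2^{\alpha}\,\Gamma\!\left(\frac{\alpha}{2}\right)}\to0$, while $\dfrac{\gamma_{d,\alpha}}{\alpha}\to\dfrac{\Gamma(d/2)}{2\pi^{d/2}}=\dfrac{1}{\sigma_{d-1}}$, using $\sigma_{d-1}=2\pi^{d/2}/\Gamma(d/2)$. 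Substituting into (i) gives $\|I_\alpha\|_{L^1(B(0,R))}=\sigma_{d-1}\frac{\gamma_{d,\alpha}}{\alpha}R^{\alpha}\to1$, since also $R^{\alpha}\to1$; substituting into (ii), the factor $\big(\sigma_{d-1}\frac{p-1}{d-\alpha p}\big)^{1/p'}R^{(\alpha p-d)/p}$ stays bounded as $\alpha\to0$ while $\gamma_{d,\alpha}\to0$, so $\|I_\alpha\|_{L^{p'}(\R^d\setminus B(0,R))}\to0$.

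I do not anticipate any genuine obstacle: the statement is essentially a bookkeeping exercise. The only places demanding a little care are matching the finiteness condition for the radial integral with the hypothesis $\alpha p<d$, correctly simplifying the resulting power of $R$, and invoking the behaviour $\Gamma(\alpha/2)\sim 2/\alpha$ to see that $\gamma_{d,\alpha}/\alpha$ has the finite limit $1/\sigma_{d-1}$ — which is what makes the first limit equal to $1$ rather than $0$ or $\infty$.
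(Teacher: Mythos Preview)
Your argument is correct and follows essentially the same route as the paper: a direct computation in spherical coordinates for the two radial integrals, followed by the asymptotics $\tfrac{\alpha}{2}\Gamma(\tfrac{\alpha}{2})=\Gamma(\tfrac{\alpha}{2}+1)\to1$ to obtain $\gamma_{d,\alpha}/\alpha\to 1/\sigma_{d-1}$. One small slip to fix: from $(\alpha-d)p'<-d$ you should deduce $\alpha<d/p$ (not $\alpha<d/p'=d(1-\tfrac1p)$), which is indeed the hypothesis $\alpha p<d$; your final conclusion is right, only that intermediate line needs correcting.
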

	
	\begin{proof} 
		We start by noticing that, if $b\neq d$, $0\leq R_1\leq R_2<+\infty$, then
		\begin{equation*}
			\int_{B(0,R_2)\setminus B(0,R_1)}\frac{1}{|x|^b}dx  =  \sigma_{d-1}\int_{R_1}^{R_2} r^{d-1-b}dr = \sigma_{d-1}\bigg[\frac{R_2^{d-b}}{d-b}-\frac{R_1^{d-b}}{d-b}\bigg].
		\end{equation*}
		Considering first $b=d-\alpha$, $R_2=R$ and $R_1=0$ we obtain (i). Then choosing $b=(d-a)\frac{p}{p-1}$, $R_1=R$ we obtain (ii) by letting $R_2\rightarrow\infty$ and noticing that $a p<d$ or equivalently $d-b<0$.
		
		Since
		\begin{equation*}
			\lim_{\alpha\rightarrow0}	\frac{\gamma_{d,\alpha}}{\alpha}=\lim_{\alpha\longrightarrow0}\frac{\Gamma(\frac{d-\alpha}2)}{\pi^\frac{d}22^{\alpha+1}\frac{a}{2}\Gamma(\frac{\alpha}{2})}=\lim_{\alpha\rightarrow0}\frac{\Gamma(\frac{d-\alpha}2)}{\pi^\frac{d}22^{\alpha+1}\Gamma(\frac{\alpha}{2}+1)}=\frac{\Gamma(\frac{d}2)}{2\pi^\frac{d}2}=\frac{1}{\sigma_{d-1}},
		\end{equation*}
		the conclusions follows. 
	\end{proof}% \cite[Theorem 2.8]{Kurokawa1981}
	
	As a consequence, the Riesz kernel is an approximation of the identity, and it was observed by Kurokawa \cite{Kurokawa1981}, in the sense that
	\begin{equation*}
		I_\alpha * f \longrightarrow f, \quad \text{ as $\alpha\rightarrow 0$},
	\end{equation*}
	for instance, in $L^p(\R^d)$, if $f\in L^p(\R^d)\cap L^q(\R^d)$, $1<q<p$ or pointwise at each point $x$ of the Lebesgue set of $f\in L^p(\R^d)$, $1\leq p<\infty$. In particular, if $D u\in \bs L^p(\R^d)\cap \bs L^q(\R^d)$, with $1<q<p$, as observed in \cite{RodriguesSantos2019},   we have $D^su\underset{s\rightarrow1}{\longrightarrow}Du$ in $\bs L^p(\R^d)$. We shall need the following stronger result which is also a consequence of this observation (see also Proposition 2.10 of \cite{Kurokawa1981} for the pointwise convergence).
	
	\begin{theorem}\label{ialphag} If $g\in L^p(\R^d)\cap L^\infty(\R^d)\cap C(\R^d)$, for $p>1$, is uniformly continuous in $\R^d$, then
		\begin{equation*}
			\lim_{\alpha\rightarrow 0}\|I_\alpha * g-g\|_{L^\infty(\R^d)}=0.
		\end{equation*}
	\end{theorem}
	\begin{proof}
		Let $\eps>0$ and $0<\delta<1$ be such that
		\begin{equation*}
			|z-x|\leq \delta\ \Rightarrow\ |g(z)-g(x)|\leq \eps,\quad 	\forall x,z\in\R^d.
		\end{equation*}
		
		Using Lemma \ref{I_II_II} consider $\alpha_0$ such that, for $0<\alpha<\alpha_0$,
		\begin{equation*}
			\big|\|I_\alpha\|_{L^1(B(0,\delta))}-1\big|\leq \eps, \quad 
			\|I_\alpha\|_{L^{p'}(\R^d\setminus B(0,\delta))}\leq \eps.
		\end{equation*}
		Then, for all $x\in\R^d$,
		\begin{align*}
			(I_\alpha * g)(x)-g(x)=&
			\int_{B(0,\delta)}I_\alpha (y) g(x-y)\,dy+\int_{\R^d\setminus B(0,\delta)}I_\alpha (y) g(x-y)\,dy -g(x)\\
			=&
			\int_{B(0,\delta)}I_\alpha (y)( g(x-y)-g(x))\,dy+
			g(x)\Big(\int_{B(0,\delta)}I_\alpha (y) \,dy-1\Big)\\ &+
			\int_{\R^d\setminus B(0,\delta)}I_\alpha (y) g(x-y)\,dy.
		\end{align*}
		Hence
		\begin{align*}
			|(I_\alpha * g)(x)-g(x)|\leq&
			\eps \|I_\alpha\|_{L^1(B(0,\delta))}+\|g\|_{L^\infty(\R^d)}\big(\|I_\alpha\|_{L^1(B(0,\delta))}-1\big)\\
			&\hspace{3mm}+\|I_\alpha\|_{L^{p'}(\R^d\setminus B(0,\delta))}\|g\|_{L^p(\R^d)}\\
			\leq & \eps^2+\eps\big(\|g\|_{L^\infty(\R^d)}+\|g\|_{L^p(\R^d)}\big)
		\end{align*}
		and the conclusion follows.
	\end{proof}
	
	As it was proved in \cite[Theorem 1.2]{ShiehSpector2015}, the fractional gradient satisfies
	\begin{equation}\label{xx}
		D^su=I_{1-s}Du=I_{1-s}*Du, 
	\end{equation}
	at least for functions $u\in C^\infty_c(\R^d)$, although that proof is equally valid for functions only in $C^1_c(\R^d)$, see \cite[Proposition 2.2]{ComiStefani2019_2}. As a consequence of well-known properties of the Riesz potential, \eqref{xx} is then also valid for functions $u$ in the usual Sobolev space $W^{1,p}(\R^d)$, $1<p<\infty$, since $Du\in \bs L^p(\R^d)$. In particular, as an immediate consequence of Theorem \ref{ialphag}, we obtain the uniform approximation of continuous gradients by their fractional gradients.
	
	\begin{corollary}\label{2.3}
		For $w\in C^1_c(\R^d)$ we have 
		\begin{equation}\label{Dswdw}
			D^sw\underset{s\rightarrow1}{\longrightarrow}Dw\quad \text{in $\bs L^\infty(\R^d)$.}
		\end{equation}
	\end{corollary}
	
	\begin{remark}\label{remark2.4}
		The convergence in \eqref{Dswdw} has been shown with a different proof for functions in $C^2_c(\R^d)$ and, if $w\in W^{1,p}(\R^d)$, also in $\bs L^p(\R^d)$ for $1\leq p< \infty$, respectively in Proposition 4.4 and in Theorem 4.11 of \cite{ComiStefani2019}. This property can be seen as a localization of the fractional gradient. It has also been shown for functions in $W^{1,p}(\R^d)$ for $1<p<\infty$, in \cite[Theorem 3.2]{BellidoCuetoMora-Corral2021}.	
	\end{remark}
	
	For smooth functions with compact support, as it was observed in \cite{ComiStefani2019_2}, the distributional Riesz fractional gradient $D^s$ can also be defined for $0<s<1$ by 
	\begin{equation}
		\label{dsigma2}
		D^s u(x)=\mu_s\int_{\R^d}\frac{u(x)-u(y)}{|x-y|^{d+s}}\frac{x-y}{|x-y|}\,dy,
	\end{equation}
	where $\mu_s=(d+s-1)\gamma_{d,1-s}$ is bounded and $\lim_{s\rightarrow 1}\mu_s=0$.
	
	Let $\Omega$ be a bounded open subset of $\R^d$ and set 
	\begin{equation*}
		\Omega_R=\{x\in\R^d:d(x,\Omega)<R\}, \quad \text{for $R>0$.}
	\end{equation*}
	
	In this work, for a function $u$ defined in $\Omega$, we still denote  its extension by zero to $\R^d$ by $u$.
	
	From \eqref{xx} or \eqref{dsigma2}, we see that for a function $u\in C^1_c(\Omega)$, while $Du=0$ in $\R^d\setminus\Omega$, $D^su$ is in general different from zero in the whole $\R^d$. Nevertheless the following remark holds.
	
	\begin{remark}\label{dsigma_infty} For $u\in C^1_c(\Omega)$, from \eqref{dsigma2} we easily obtain 
		\begin{equation*}
			|D^s u(x)|\leq \frac{\mu_s }{d(x,\Omega)^{d+s}}\|u\|_{L^1(\Omega)}, \quad \forall x\in\R^d\setminus \overline{\Omega},
		\end{equation*}
		and, consequently, for all $R>0$ 
		\begin{equation*}
			\lim_{s\rightarrow 1}\|D^s u\|_{L^\infty(\R^d\setminus\Omega_R)}=0.
		\end{equation*}
	\end{remark}
	
	Now, for $u\in C^\infty_c(\R^d)$ and $1\leq p<\infty$, $0<s\leq 1$, we introduce the norms
	\begin{equation*}
		\|u\|_{\Lambda^{s,p}}=\Big(\|u\|_{L^p(\R^d)}^p+\|D^s u\|_{\bs L^p(\R^d)}^p\Big)^\frac1p
	\end{equation*}
	and we define the Banach spaces
	\begin{equation*}
		\Lambda^{s,p}(\R^d)=\overline{C^\infty_c(\R^d)}^{\|\,\cdot\,\|_{\Lambda^{s,p}}},
	\end{equation*}
	where we recognize $\Lambda^{1,p}(\R^d)=W^{1,p}(\R^d)$, as the usual Sobolev spaces.
	
	For $1<p<\infty$, in \cite{ShiehSpector2015} it was proved that $\Lambda^{s,p}(\R^d)$ (denoted there as $X^{s,p}(\R^d)$) is equal to $\{u\in L^p(\R^d): u=g_s*f, \text{ for some }f\in L^p(\R^d)\}$, where $g_s$ are the Bessel potentials, for $s\in\R$, which were introduced in $1960$ by A. Cald\'eron and J. L. Lions. They are also called Bessel potential spaces or generalised Sobolev spaces (see \cite[p. 219]{Adams1975} or \cite{Campos2021}). It is worth to recall that we have $\Lambda^{s+\eps,p}(\R^d)\hookrightarrow W^{s,p}(\R^d)\hookrightarrow \Lambda^{s-\eps,p}(\R^d)$, if $1<p<\infty$ and $s>\eps>0$, where $W^{s,p}(\R^d)$ denotes the fractional Sobolev-Slobodeckij spaces. In fact, $\Lambda^{k,p}(\R^d)=W^{k,p}(\R^d)$ for nonnegative integers $k$ or when $p=2$ and $s>0$, being  $\Lambda^{s,2}(\R^d)=W^{s,2}(\R^d)=H^{s}(\R^d)$ Hilbert spaces.
	
	Also in \cite{ShiehSpector2015} it was shown the fractional Sobolev inequality for $1<p<\infty$ and $0<s<1$,
	\begin{equation}\label{trudinger}
		\|u\|_{L^{p^*}(\R^d)}\leq C_*\|D^su\|_{\bs L^p(\R^d)},\quad \forall u\in C^\infty_c(\R^d), 
	\end{equation}
	for a constant $C_*>0$, where $p^*=\frac{dp}{d-sp}$, if $sp<d$, as well as the fractional Trudinger ($p^*<\infty$, if $sp=d$) and Morrey ($p^*=\infty$, if $sp>d$) inequalities.
	If $sp>d$, in the left side of \eqref{trudinger}, we may take  the semi-norm of $\beta$-H\"older continuous functions, $0<\beta=s-\frac{d}{p}$.
	
	For an open bounded set $\Omega\subset\R^d$, we define the subspace, for $0<s\leq 1$, 
	\begin{equation}\label{sp0}
		\Lambda^{s,p}_0(\Omega)=\overline{C^\infty_c(\Omega)}^{\,\|\,\cdot\,\|_{\Lambda^{s,p}}},\quad 1<p<\infty. 
	\end{equation}
	Clearly, considering the smooth functions with compact support trivially extended by zero outside their support, we have $\Lambda^{s,p}_0(\Omega)\subset \Lambda^{s,p}(\R^d)$.  We observe that, by definition, for $u\in \Lambda^{s,p}_0(\Omega)$ the $D^s u$ is the limit in $\bs L^p(\R^d)$ of $D^s u_n$, for some sequence $u_n\in C^\infty_c(\Omega)$.  Observing that, for $\varphi\in C^\infty_c(\Omega)$, we have
	$$\int_{\R^d}\varphi\,D^su_n=-\int_{\R^d}u_n\,D^s\varphi=-\int_{\R^d}u_n(I_{1-s}D\varphi)=-\int_{\R^d}(I_{1-s}u_n) D\varphi,$$
	by  using Fubini's Theorem, and letting $n\rightarrow\infty$, we conclude that
	$D^su=D(I_{1-s}u)$, i.e. $D^su$ is the distributional Riesz fractional gradient of $u$. Moreover, in the limit, we may also conclude that $u\in\Lambda^{s,p}_0(\Omega)$  also satisfies
	\begin{equation}\label{wDs}
		\int_{\R^d}\varphi D^su =-\int_{\R^d}uD^s\varphi,  \qquad\forall \varphi\in C^\infty_c(\Omega).
	\end{equation}

	From \eqref{trudinger} we obtain a Poincar\'e inequality 
	\begin{equation}\label{PoincareSobolev}
		\|u\|_{L^p(\Omega)}\le C_{p}\|D^s u\|_{\bs L^p(\R^d)},\quad\forall u\in \Lambda^{s,p}_0(\Omega),
	\end{equation}
	for some $C_p>0$, and in $\Lambda^{s,p}_0(\Omega)$ we shall use  the equivalent norm
	\begin{equation}\label{p-norm}
		\|u\|_{\Lambda^{s,p}_0(\Omega)}=\|D^su\|_{\bs L^p(\R^d)}.
	\end{equation}
	
	We can extend the definition \eqref{sp0} for $p=1$ and define
	\begin{equation*}
		\Lambda^{s,\infty}_0(\Omega)=\Big\{u\in \bigcap_{1< p<\infty}\Lambda^{s,p}_0(\Omega): D^su\in \bs L^\infty(\R^d)\Big\}.
	\end{equation*}
	The fractional Poincar\'e inequality \eqref{PoincareSobolev} can be made more precise with respect to $s$, $0<s<1$, to also include the limit cases $p=1$ and $p=\infty$.
	
	\begin{proposition}
		Let $\Omega\subset \R^d$ be a bounded open set. Then there exists a constant $C_0=C_0(\Omega,d)>0$ such that,
		for all $0<s<1$ and $1\leq p\leq \infty$,
		\begin{equation}\label{PoincareSobolevBellido}
			\|u\|_{L^p(\Omega)}\le \tfrac{C_0}{s}\|D^s u\|_{\bs L^p(\R^d)},\quad \forall u\in \Lambda^{s,p}_0(\Omega).
		\end{equation}
	\end{proposition}
	\begin{proof}
		For $1<p<\infty$, this is Theorem 2.9 of \cite{BellidoCuetoMora-Corral2021}, but the same proof is still valid for $p=1$. Since $C_0$ is independent of $p$, the case $p=\infty$ is obtained by letting $p\rightarrow \infty$ in \eqref{PoincareSobolevBellido}.
	\end{proof}
	
	In addition, in a bounded open set $\Omega\subset \R^d$ satisfying the extension property, it is well known that $\Lambda^{s,2}_0(\Omega)=W^{s,2}_0(\Omega)=H^s_0(\Omega)$ (see, for instance, \cite{LoRodrigues2023}).
	
	Although there is no monotone inclusions in $p$ of $L^p(\R^d)$ the following result holds.
	
	\begin{theorem}\label{OmegaR} Let $\Omega\subset\R^d$ be a bounded open set,  $p\in[1,\infty)$ and $0<s<1$. Then  there exists a positive constant $C=\big(1+\frac{1}{(p-1)d+ps}\big)C(d,\Omega)$, such that, for $R\geq 1$, 
		\begin{equation}\label{desigualdadeomegaR}
			\int_{\R^d\setminus \Omega_R}|D^s u(x)|^pdx\leq \frac{\mu_s^pC}{R^{(p-1)d+ps}} \|u\|_{L^1(\Omega)}^p, \quad \forall u\in\Lambda_0^{s,p}(\Omega).	
		\end{equation}
		
		As a consequence, the following inclusions hold
		\begin{equation}\label{Omegapq}
			\Lambda_0^{s,q}(\Omega)\subseteq \Lambda_0^{s,p}(\Omega),\quad 1\leq p< q<\infty,
		\end{equation}
		and are continuous, since there exists $C_{p,q}>0$ such that
		\begin{equation}\label{PoincareSobolevLambda}
			\|D^s u\|_{\bs L^p(\R^d)}\leq C_{p,q} \|D^s u\|_{\bs L^q(\R^d)},\quad u\in \Lambda_0^{s,q}(\Omega).
		\end{equation}
		
		In addition, $C_{1,q}=\frac{E}{s}$, where $E$ is independent of $s$, and $C_{p,q}$ is independent of $s$, if $p>1$.
	\end{theorem}
	\begin{proof} It is enough to consider $u\in C^\infty_c(\Omega)$. If $\delta(\Omega)$ denotes the diameter of $\Omega$, consider $S=\frac{1}{2}\delta(\Omega)+R$ and $z$ such that $\Omega_R\subseteq B(z,S)$. Consider the annulus $A_n=B(z,S+n+1)\setminus B(z,S+n)$, for each $n\in\N_0$.
		
		Letting $\omega_d=\big|B(0,1)\big|$ and $\mu_s$ be as in \eqref{dsigma2}, we have
		\begin{align*}
			\tfrac{1}{\mu_s^p}	\int_{\R^d\setminus B(z,S)}|D^s u(x)|^pdx
			\leq&\int_{\R^d\setminus B(z,S)}\bigg|\int_\Omega \tfrac{|u(y)|}{|x-y|^{d+s}}dy\bigg|^pdx\\
			=&
			\sum_{n=0}^\infty\int_{A_n}\bigg(\int_\Omega \tfrac{|u(y)|}{|x-y|^{d+s}}\,dy\bigg)^pdx\\
			\leq&
			\sum_{n=0}^\infty\int_{A_n}\bigg(\int_\Omega \tfrac{|u(y)|}{(n+R)^{d+s}}\,dy\bigg)^pdx\\
			=& 
			\sum_{n=0}^\infty\tfrac{\omega_d\big[(S+n+1)^d-(S+n)^d\big]}{(n+R)^{p(d+s)}} \|u\|_{L^1(\Omega)}^p.
		\end{align*}
		By the Lagrange theorem, there exists  $\nu\in(n,n+1)$ such that $(n+1+S)^d-(n+S)^d=d(\nu+S)^{d-1}\leq d(n+1+S)^{d-1}$
		and then, as $S\geq 1$ and $n+R\geq\frac{R}{S}(n+S)$,
		\begin{align*}
			\tfrac{1}{\mu_s^p}	\int_{\R^d\setminus B(z,S)} & |D^s u(x)|^p  dx
			\leq d\left(\tfrac{S}{R}\right)^{p(d+s)}	\omega_d\sum_{n=0}^\infty
			\big(\tfrac{n+1+S}{n+S}\big)^{d-1}\frac{1}{(n+S)^{(p-1)d+ps+1}} \|u\|_{L^1(\Omega)}^p\\
			&\leq d\left(\tfrac{S}{R}\right)^{p(d+s)}	\omega_d2^{d-1}\sum_{n=0}^\infty
			\frac{1}{(n+S)^{(p-1)d+ps+1}} 				 \|u\|_{L^1(\Omega)}^p\\ 
			&\leq d\left(\tfrac{S}{R}\right)^{p(d+s)}	\omega_d2^{d-1}\bigg[ \frac{1}{S^{(p-1)d+ps+1}}+\int_S^\infty \frac{1}{x^{(p-1)d+ps+1}}\,dx\bigg] \|u\|_{L^1(\Omega)}^p\\
			&= d\left(\tfrac{S}{R}\right)^{p(d+s)}	\omega_d2^{d-1}\bigg[\frac{1}{S^{(p-1)d+ps+1}}+\frac{1}{(p-1)d+ps}\frac{1}{S^{(p-1)d+ps}}\bigg] \|u\|_{L^1(\Omega)}^p\\
			&\leq d\left(\tfrac{S}{R}\right)^{p(d+s)}	\omega_d2^{d-1}\bigg(1+\frac{1}{(p-1)d+ps}\bigg)\frac{1}{S^{(p-1)d+ps}}\|u\|_{L^1(\Omega)}^p\\
			&= d\left(\tfrac{S}{R}\right)^d	\omega_d2^{d-1}\bigg(1+\frac{1}{(p-1)d+ps}\bigg)\frac{1}{R^{(p-1)d+ps}} \|u\|_{L^1(\Omega)}^p.
		\end{align*}
		%	form where we obtain, as $S\geq R$ and $S\geq 1$,
		%	\begin{equation*}
			%		\tfrac{1}{\mu_s^p}	\int_{\R^d\setminus B(z,S)}|D^s u(x)|^pdx \leq  	d\omega_d2^{d-1}\big(1+\tfrac{1}{(p-1)d+ps}\big)\frac{1}{R^{(p-1)d+ps}} \|u\|_{L^1(\Omega)}^p.
			%	\end{equation*}
		
		On the other hand,
		\begin{align*}
			\tfrac{1}{\mu_s^p}	\int_{B(z,S)\setminus \Omega_R}|D^s u(x)|^pdx
			\leq &\int_{B(z,S)\setminus \Omega_R}\bigg(\int_\Omega \tfrac{|u(y)|}{|x-y|^{d+s}}\,dy\bigg)^pdx\\
			\leq&\int_{B(z,S)\setminus \Omega_R}\bigg(\int_\Omega \tfrac{|u(y)|}{R^{d+s}}\,dy\bigg)^pdx\\
			\leq&
			\omega_d\big(\tfrac{S}{R}\big)^d	\tfrac{1}{R^{(p-1)d+ps}} \|u\|_{L^1(\Omega)}^p.
		\end{align*}
		
		As $\frac{S}{R}\leq 1+\tfrac{1}{2}\delta(\Omega)$ we have
		\begin{equation*}
			\tfrac{1}{\mu_s^p}	\int_{\R^d\setminus \Omega_R}|D^s u(x)|^pdx\leq 
			\omega_d\big(1+\tfrac{1}{2}\delta(\Omega)\big)^d\left[d2^{d-1}\bigg(1+\frac{1}{(p-1)d+ps}\bigg)+1\right]	\frac{1}{R^{(p-1)d+ps}} \|u\|_{L^1(\Omega)}^p,
		\end{equation*}
		from where we obtain \eqref{desigualdadeomegaR}.

		For the inclusion \eqref{Omegapq}, by considering $R=1$,  there exists $C_1$ such that
		\begin{align*}
			\int_{\R^d}|D^s u(x)|^pdx	& = 	\int_{\R^d\setminus \Omega_1}|D^s u(x)|^pdx+	\int_{\Omega_1}|D^s u(x)|^pdx\\
			& \leq  C\mu_s^p\|u\|_{L^1(\Omega)}^p+|\Omega_1|^{\frac{1}{p}-\frac{1}{q}}\|D^s u\|_{\bs L^q(\Omega_1)}^p\\
			& \leq  C(\max_{s\in[0,1)}\mu_s^p)\,|\Omega|^{\frac{1}{q'}}\|u\|_{L^q(\Omega)}^p+|\Omega_1|^{\frac{1}{p}-\frac{1}{q}}\|D^s u\|_{\bs L^q(\Omega_1)}^p\\
			& \leq  	C_1	\|u\|_{\Lambda_0^{s,q}}^p
		\end{align*}
		by using Poincar\'e inequality \eqref{PoincareSobolevBellido}, yielding the conclusion.
	\end{proof}
	As in \eqref{p-norm} we define in $\Lambda^{s,\infty}_0(\Omega)$ the topology induced by $\| u\|_{\Lambda^{s,\infty}_0(\Omega)}=\|D^s u\|_{\bs L^\infty(\R^d)}$, which is a norm by Poincar\'e inequality. 
	
	\begin{proposition}\label{inftyp}
		There exists a constant $C_{p,\infty}>0$, which is independent of $s\in [\sigma,1)$ for each $\sigma>0$, such that 
		\eqref{PoincareSobolevLambda} holds for $q=\infty$.  In particular the inclusion 
		\begin{equation*}
			\Lambda^{s,\infty}_0(\Omega)\subset \Lambda^{s,p}_0(\Omega)    
		\end{equation*}  
		is continuous for all $p\geq 1$, and $\Lambda^{s,\infty}_0(\Omega)$ is a Banach space.
	\end{proposition}
	\begin{proof}	
		From Theorem \ref{OmegaR}, for $R\geq 1$  there exists $C>0$ independent of $R$, such that for all $u\in \Lambda_0^{s,\infty}(\Omega)$,
		\begin{align*}
			\int_{\R^d}|D^s u(x)|^pdx &\leq\int_{\Omega_R}|D^s u(x)|^pdx+\frac{C\mu_s^p}{R^{(p-1)d+ps}} \|u\|_{L^1(\Omega)}^p\\
			&\leq |\Omega_R|\|D^s u\|_{\bs L^\infty(\R^d)}^p+\frac{C\mu_s^p|\Omega|^{p-1}}{R^{(p-1)d+ps}} \|u\|_{L^p(\Omega)}^p\\
			&\leq |\Omega_R|\|D^s u\|_{\bs L^\infty(\R^d)}^p+\frac{C_0^p}{s^p}\frac{C\mu_s^p|\Omega|^{p-1}}{R^{(p-1)d+ps}} \|D^su\|_{L^p(\R^n)}^p
		\end{align*}
		by \eqref{PoincareSobolevBellido}.   
		Choosing 
		\begin{equation*}
			R=\max\left\{1,\left(\frac{2C_0^pC\mu_s^p|\Omega|^{p-1}}{s^p}\right)^{\frac{1}{(p-1)d+ps}}\right\}
		\end{equation*}
		we obtain 
		\begin{equation*}
			\|D^su\|_{L^p(\Omega)}\leq 2^{\frac{1}{p}} |\Omega_R|^{\frac{1}{p}} \|D^s u\|_{\bs L^\infty(\R^d)},
		\end{equation*} 
		which yields the continuity of the embedding $\Lambda^{s,\infty}_0(\Omega)\subset \Lambda^{s,p}_0(\Omega)$.	
		
		Finally,  since a Cauchy sequence $(u_n)$ in $\Lambda^{s,\infty}_0(\Omega)$ is also, for all $1<p<\infty$, a Cauchy sequence in the nested Banach spaces $\Lambda^{s,p}_0(\Omega)$, its common limit $u\in{\bigcap_{1<p<\infty}\Lambda^{s,p}_0(\Omega)}$. As $D^s u_n$ are uniformly bounded, then $D^s u$ is bounded, and therefore $u\in\Lambda^{s,\infty}_0(\Omega)$.
	\end{proof}
	
	\begin{remark}\label{bellido}
		The inclusion \eqref{Omegapq} was also obtained independently in \cite[Corollary 2.4.1]{Campos2021}, as a consequence of an interesting variant of the Poincar\'e inequality, see \cite[Theorem 2.4.3]{Campos2021}, for some constant $C_1=C_1(\Omega,\Omega_1,d)>0$,
		\begin{equation*}\|u\|_{L^p(\Omega)}\leq \tfrac{C_1}{s}\|D^su\|_{\bs L^p(\Omega_1)},\quad \forall u\in \Lambda_0^{s,p}(\Omega),
		\end{equation*}
		for an open set $\Omega_1\supset B(0,2R)\supset \Omega$, with $R>1$, $1<p<\infty$ and $0<s<1$.
	\end{remark}
	\begin{remark}\label{2.10} We note that, for  $p\in[1,\infty)$ we have the inclusions $ W^{1,p}_0(\Omega)\subset \Lambda^{s,p}_0(\Omega)\subset\Lambda^{\sigma,p}_0(\Omega)$, for $0<\sigma<s<1$. We may conclude that, as  a consequence of \eqref{desigualdadeomegaR},  for $R\geq 1$, as $\displaystyle\lim_{s\rightarrow 1}\mu_s=0$,
		\begin{equation*}
			\lim_{s\rightarrow 1}	\|D^s u\|_{L^p(\R^d\setminus\Omega_R)}=0, \quad\forall u\in W^{1,p}_0(\Omega).
		\end{equation*}
	\end{remark}
	
	\begin{remark}
		Also from Theorem \ref{OmegaR}, for $R\ge1$ we can take the limit as $p\rightarrow\infty$ in \eqref{desigualdadeomegaR}, to conclude (compare with Remark \ref{dsigma_infty}):
		\begin{equation*}
			\|D^su\|_{\bs L^\infty(\R^d\setminus\Omega_R)}\le\tfrac{\mu_s}{R^{d+s}}\|u\|_{L^1(\Omega)},\quad\forall u\in\Lambda^{s,\infty}_0(\Omega),\ 0<s<1.
		\end{equation*}
	\end{remark}
	
	\begin{remark}\label{r2.12}
		We denote by $W^{1,\infty}_0(\Omega)$  the space of Lipschitz functions vanishing on the boundary of $\Omega$. Extending a function $u\in W^{1,\infty}_0(\Omega)$ by zero outside $\Omega$ and using definition \eqref{dsigma2}, we have, for each $x\in\R^d$,
		\begin{align*}\left|D^s u(x)\right|&\le\mu_s\int_{\R^d}\frac{|u(x)-u(y)|}{|x-y|^{d+s}}\,dy\\
			&=\mu_s\|Du\|_{\bs L^\infty(\Omega)}\int_{\{|x-y|\le 1\}}\frac{dy}{|x-y|^{d+s-1}}+\mu_s\, 2\|u\|_{L^\infty(\Omega)}\int_{\{|x-y|>1\}}\frac{dy}{|x-y|^{d+s}}\\
			&\le\mu_s\,C_s\|Du\|_{\bs L^\infty(\Omega)},
		\end{align*}
		for a finite $C_s>0$, by the Poincar\'e inequality and since both integrals are finite for $s\in(0,1)$. Consequently,
		\begin{equation}\label{1sinfty}
			W^{1,\infty}_0(\Omega)\subset\Lambda^{s,\infty}_0(\Omega),\quad\forall s\in(0,1).
		\end{equation}
	\end{remark}
	
	\vspace{4mm}
	
	The Lions-Calder\'on spaces $\Lambda_0^{s,p}(\Omega)$, $0<s<1$, $1<p<\infty$, similarly to the Sobolev-Slobodeckij spaces $W^{s,p}_0(\Omega)$, have continuous and compact embeddings of Sobolev and Rellich-Kondrachov-type for $\Omega\subset\R^d$ open and bounded,
	\begin{equation}\label{2.19}
		\Lambda_0^{s,p}(\Omega)\subset L^q(\Omega), 
	\end{equation}
	for $q\in \big[1,\frac{dp}{d-sp}\big]$ if $sp<d$, for all $q\geq 1$ if $sp=d$, and for $q=\infty$ if $sp>d$,  the embeddings being compact in the case $sp<d$ only for $q<\frac{dp}{d-sp}=p^*$. Also the embeddings 
	\begin{equation}\label{Morrey}
		\Lambda_0^{s,p}(\Omega)\subset C^{0,\beta}(\overline\Omega),\quad\text{for $sp>d$} 
	\end{equation}
	are continuous for $0<\beta\leq s-\frac{d}{p}$ and compact for $0<\beta< s-\frac{d}{p}$, where $C^{0,\beta}(\overline\Omega)$ denotes the space of H\"older continuous functions in $\overline{\Omega}$ of exponent $\beta$. Consequently, by Proposition \ref{inftyp}, we have the compact embeddings
	\begin{equation}\label{2.21}
		\Lambda_0^{s,\infty}(\Omega)\subset C^{0,\beta}(\overline\Omega)\subset L^\infty(\Omega),\quad \text{for $0<\beta<s<1$}.
	\end{equation}
	
	We also have the non-trivial compact embeddings
	\begin{equation}\label{camposembedding}
		\Lambda_0^{s,p}(\Omega)\subset \Lambda_0^{\sigma,p}(\Omega), \quad 0<\sigma<s<1,\quad 1<p<\infty,
	\end{equation} 
	which proof can be found in \cite[pg. 65]{Campos2021} and is well known for $p=2$.
	
	We denote the dual space of $\Lambda_0^{s,p}(\Omega)$ by  $\Lambda^{-s,p'}(\Omega)$, $0<s<1$, $1<p<\infty$, and we have a similar characterization in terms of the fractional $s$-gradient as it was shown in \cite[Theorem 2.4.4, p. 66]{Campos2021} for bounded and unbounded open domains $\Omega\subset\R^d$.
	\begin{proposition}
		Let $0<s<1$, $1<p<\infty$ and $F\in \Lambda^{-s,p'}(\Omega)$. Then there exist functions $f_0\in L^{p'}(\Omega)$ and $f_1,\ldots,f_d\in L^{p'}(\R^d)$ such that 
		\begin{equation}\label{dual}
			[F,v]_{s,p}=\int_\Omega f_0v+\sum_{j=1}^d \int_{\R^d}f_jD^s_jv,\quad \forall v\in \Lambda_0^{s,p}(\Omega).
		\end{equation}
	\end{proposition}
	
	When $p=\infty$  and  $f_0\in L^1(\Omega)$ and $f_1,\ldots,f_d\in L^1(\R^d)$, \eqref{dual} defines a linear form  in $\Lambda_0^{s,\infty}(\Omega)$. However, these forms do not exhaust $\Lambda_0^{s,\infty}(\Omega)'$.
	
	We shall also work with the dual of $L^\infty(\R^d)$,  which is also denoted as $ba(\R^d)$ (see, for instance, \cite{RaoRao1983} and \cite{AliprantisBorder2006}) and their elements are sometimes called charges. We recall (see Example 5, Section 9, Chapter IV of \cite{Yosida1980}) that an element $\lambda\in L^\infty(\R^d)'$ can be represented by a Radon integral
	\begin{equation}\label{Radon}
		\langle\lambda,\varphi\rangle=\int_{\R^d}\varphi d\lambda^*,\quad\forall \varphi\in L^\infty(\R^d),
	\end{equation}
	for a finitely additive measure $\lambda^*$, which is of bounded variation and absolutely continuous with respect to the Lebesgue measure in $\R^d$.
	
	We say that a charge $\lambda$ is positive, or simply $\lambda\geq 0$, if  $\langle\lambda,\varphi\rangle\ge0$ for any $\varphi\in L^\infty(\Omega)$, $\varphi\ge0$.
	
	Exactly as for the Lebesgue integral, we have the H\"older inequality for positive charges (see \cite[p.122]{RaoRao1983}).
	
	\begin{proposition} \label{desigHolder} 
		Let $p>1$ and $\lambda\in L^\infty(\R^d)'$ be positive. Then
		\begin{equation*}
			\big|\langle\lambda,\varphi\psi\rangle\big|\le\langle\lambda,|\varphi|^p\rangle^\frac1{p}\langle\lambda,|\psi|^{p'}\rangle^\frac1{p'},\quad\forall \varphi,\psi\in L^\infty(\Omega).
		\end{equation*}
	\end{proposition}
	
	\section{Variational inequalities with $s$-gradient constraints} \label{section3}
	
	Let $\Omega\subset\R^d$ be open and bounded, with the extension property, i.e., the extension of $u\in H^s_0(\Omega)$ by zero in $\R^d\setminus\Omega$ is in $H^s(\R^d)$, $0<s\le1$. This holds, in particular, for domains with Lipschitz boundaries (see for instance \cite[Section 5]{DiNezaPalatucciValdinoci2012}). To consider $s$-gradient constrained problems, we define the following closed convex sets
	\begin{equation}\label{kg}
		\K^s_g=\big\{v\in H^s_0(\Omega):|D^sv|\le g\text{ a.e. in }\R^d\big\},\quad0<s\le1,
	\end{equation}
	for prescribed thresholds satisfying
	\begin{equation}\label{g0}
		g\in L^2_{loc}(\R^d),\quad g\ge0\text{ a.e. in }\R^d,
	\end{equation}
	or, in the bounded case,
	\begin{equation}\label{g*}
		g\in L^\infty_{loc}(\R^d),\quad g\ge0\text{ a.e. in }\R^d.
	\end{equation}
	In Lemma \ref{klimitado}, we will see that these assumptions on $g$ are enough for $\K_g^s$ to be bounded in $H^s_0(\Omega)$ and $\Lambda^{s,\infty}_0(\Omega)$, respectively.
	
	For $0<s\leq 1$, we define a bilinear form by letting
	\begin{equation}\label{bilinear}
		\mathscr L^s(u,v)=\int_{\R^d}AD^su\cdot D^sv+\int_\Omega \bs d u\cdot D^sv+\int_\Omega(\bs b\cdot D^su+cu)v.
	\end{equation}
	Here the principal part may be degenerate, under the assumption on the matrix $A=A(x)$:
	\begin{equation}\label{matrix}
		A(x)\bs\xi\cdot\bs\xi\ge0,\quad\forall\bs\xi\in\R^d,\ \text{a.e. }x\in\R^d.
	\end{equation}
	In addition, we shall assume that the coefficients of the bilinear form satisfy, when $u,v\in H^s_0(\Omega)$, 
	\begin{equation}\label{assumptions1}
		A\in L^\infty(\R^d)^{d^2},\bs b,\bs d\in\bs L^r(\Omega)\ \text{ and }\ c\in L^{\tfrac{r}2}(\Omega),\quad r>\tfrac{d}{s}
	\end{equation}
	or, in the bounded case, when $u,v\in\Lambda^{s,\infty}_0(\Omega)$,
	\begin{equation}\label{assumptions2}
		A\in L^{p_1}(\R^d)^{d^2}, \ {p_1}\in[1,\infty),\ \bs b,\bs d\in\bs L^1(\Omega)\ \text{ and }\ c\in L^{1}(\Omega).
	\end{equation}
	
	Similarly, for $0<s\le1$, we may define the linear form
	\begin{equation}\label{fs}
		F_s(v)=[F,v]_s=\int_\Omega f_{\#}v+\int_{\R^d}\bs f\cdot D^sv
	\end{equation}
	for any $v\in H^s_0(\Omega)$,
	with
	\begin{equation}\label{assumptions_s1}
		f_\#\in L^{2^{\#}}(\Omega),\quad\bs f\in\bs L^2(\R^d),
	\end{equation}
	where, by the Sobolev embedding \eqref{trudinger}, $2^{\#}=\frac{2d}{d+2s}$ if $0<s<\tfrac{d}2$, or $2^{\#}=q$ for any $q>1$ when $s=\frac12$, and $2^{\#}=1$ when $\frac12<s\le1$ or, in the bounded case, for any $v\in\Lambda^{s,\infty}_0(\Omega)$, with
	\begin{equation}\label{assumptions_s2}
		f_{\#}\in L^1(\Omega),\quad\bs f\in\bs L^{q_1}(\R^d), \ {q_1}\in[1,\infty).
	\end{equation}
	Notice that in the case $s=1$, since $u,v$ and $Du,Dv$ are zero in $\R^d\setminus\Omega$, all the integration domains in \eqref{bilinear} and \eqref{fs} reduce to $\Omega$.
	
	\begin{theorem}\label{teo3.1} Assume \eqref{matrix}, and suppose that
		\begin{enumerate}
			\item[i)] either assumptions \eqref{g0}, \eqref{assumptions1}, and \eqref{assumptions_s1} hold, 
			\item[ii)] or assumptions \eqref{g*}, \eqref{assumptions2}, and \eqref{assumptions_s2} hold.
		\end{enumerate}
		
		Then, for $0<s\leq 1$, there exists a solution of the $s$-gradient constraint variational inequality
		\begin{equation}\label{iv}
			u\in\K^s_g:\qquad\mathscr L^s(u,v-u)\ge[F,v-u]_s,\quad\forall v\in\K_g^s.
		\end{equation}
	\end{theorem}
	
	We will use the following lemma in the proof of this theorem.
	
	\begin{lemma}\label{klimitado} For $1\leq p\leq \infty$ and $g\in L_{loc}^p(\R^d)$, with $g\geq 0$, the set $\K^s_g$ is bounded in $\Lambda^{s,p}_0(\Omega)$. More precisely, there exists $R=R(p,s)$ such that, for $u\in\K^s_g$,
		\begin{equation}\label{1sobrep}
			\|D^s u\|_{\bs L^p(\R^d)}\le 2^\frac1p \|g\|_{L^p(\Omega_R)},\ \text{if $p<\infty$}, \quad	\|D^s u\|_{\bs L^\infty(\R^d)}\leq \|g\|_{L^\infty(\Omega_R)}.
		\end{equation}
	\end{lemma}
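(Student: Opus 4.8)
The key point is that membership in $\K^s_g$ forces $|D^su|\le g$ everywhere in $\R^d$, so the $L^p$-norm of $D^su$ on a fixed enlarged domain $\Omega_R$ is automatically controlled by $\|g\|_{L^p(\Omega_R)}$; the only thing to handle is the tail contribution from $\R^d\setminus\Omega_R$, where $g$ need only be locally integrable. That tail is exactly what Theorem~\ref{OmegaR} estimates. The plan is therefore: (1) split $\|D^su\|_{\bs L^p(\R^d)}^p$ as the integral over $\Omega_R$ plus the integral over $\R^d\setminus\Omega_R$; (2) bound the first piece by $\|g\|_{L^p(\Omega_R)}^p$ using the pointwise constraint; (3) bound the second piece, via \eqref{desigualdadeomegaR}, by $\mu_s^p C\,R^{-((p-1)d+ps)}\|u\|_{L^1(\Omega)}^p$; (4) absorb $\|u\|_{L^1(\Omega)}$ back into $\|D^su\|_{\bs L^p(\R^d)}$ through the Poincar\'e inequality \eqref{PoincareSobolevBellido} (together with $\|u\|_{L^1(\Omega)}\le|\Omega|^{1-1/p}\|u\|_{L^p(\Omega)}$ when $p>1$), and then choose $R=R(p,s)$ large enough that the resulting coefficient of $\|D^su\|_{\bs L^p(\R^d)}^p$ on the right is at most $\tfrac12$; (5) move that term to the left, which yields $\|D^su\|_{\bs L^p(\R^d)}^p\le 2\|g\|_{L^p(\Omega_R)}^p$, i.e.\ the first inequality in \eqref{1sobrep}.

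\medskip

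For the case $p=\infty$ the argument is even more direct: the constraint $|D^su|\le g$ a.e.\ in $\R^d$ gives immediately $\|D^su\|_{\bs L^\infty(\R^d\setminus\Omega_R)}\le\|g\|_{L^\infty(\R^d\setminus\Omega_R)}$ for every $R$, and since $g\in L^\infty_{loc}(\R^d)$ we actually have $\|D^su\|_{\bs L^\infty(\Omega_R)}\le\|g\|_{L^\infty(\Omega_R)}$; but we want the bound in terms of a single fixed $\Omega_R$. One can either quote the remark following Corollary~\ref{inftyp} (the $p\to\infty$ limit of \eqref{desigualdadeomegaR}, which shows the tail $\|D^su\|_{\bs L^\infty(\R^d\setminus\Omega_R)}\le\tfrac{\mu_s}{R^{d+s}}\|u\|_{L^1(\Omega)}$ is small once the finite-$p$ bound is already in hand), or obtain it by letting $p\to\infty$ in the finite-$p$ estimate \eqref{1sobrep} using $\|D^su\|_{\bs L^p(\Omega_R)}\to\|D^su\|_{\bs L^\infty(\Omega_R)}$ and $\|g\|_{L^p(\Omega_R)}\to\|g\|_{L^\infty(\Omega_R)}$ and $2^{1/p}\to1$. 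Either route gives $\|D^su\|_{\bs L^\infty(\R^d)}\le\|g\|_{L^\infty(\Omega_R)}$.

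\medskip

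The only delicate point is step~(4)--(5): one must check that the multiplicative constant produced by Theorem~\ref{OmegaR} and \eqref{PoincareSobolevBellido}, namely something of the form $\mu_s^p C\,|\Omega|^{p-1}\,(C_0/s)^p\,R^{-((p-1)d+ps)}$, can be made $\le\tfrac12$ by enlarging $R$ alone, which is clear since the exponent $(p-1)d+ps>0$ for all $p\ge1$ and $0<s<1$; this fixes $R=R(p,s)$ (note the dependence on $s$ through $\mu_s$ and $C_0/s$, which is harmless since we only claim existence of such an $R$). There is no genuine obstacle here — it is a self-improving (``hole-filling'') absorption argument — the substantive input, the decay of the fractional gradient far from $\Omega$, has already been done in Theorem~\ref{OmegaR}.
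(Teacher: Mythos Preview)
Your proposal is correct and follows essentially the same approach as the paper: split the $L^p$-norm over $\Omega_R$ and its complement, use the pointwise constraint on $\Omega_R$ and the tail estimate \eqref{desigualdadeomegaR} on the complement, absorb via the Poincar\'e inequality \eqref{PoincareSobolevBellido} (together with $\|u\|_{L^1(\Omega)}^p\le|\Omega|^{p-1}\|u\|_{L^p(\Omega)}^p$) by choosing $R$ so that the tail coefficient is $\le\tfrac12$, and then pass $p\to\infty$ for the second inequality. The paper's proof is exactly this; your write-up is slightly more detailed but there is no substantive difference.
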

	\begin{proof}
		By the Theorem \ref{OmegaR}, when $p<\infty$, choosing $R$ such that $\frac{C\mu_s^p}{R^{(p-1)d+ps}}\frac{C_0^p}{s^p}|\Omega|^{p-1}\leq \frac{1}{2}$, and using \eqref{desigualdadeomegaR}, we have
		\begin{align*}
			\|D^s u\|_{\bs L^p(\R^d)}^p&=
			\|D^s u\|_{\bs L^p(\R^d\setminus\Omega_{R})}^p+	\|D^s u\|_{\bs L^p(\Omega_{R})}^p\ \leq \ \frac{C\mu_s^p}{R^{(p-1)d+ps}}\|u\|_{L^1(\Omega)}^p+\int_{\Omega_{R}}|D^s u|^p\\
			&\le\tfrac12	\|D^s u\|_{\bs L^p(\R^d)}^p+\int_{\Omega_R}g^p,
		\end{align*}
		from where we obtain the first inequality. Letting $p\rightarrow\infty$, the second inequality follows.
	\end{proof}
	
	\begin{proof} (of Theorem \ref{teo3.1})
		This existence result in the  Hilbertian case i) is a consequence of a theorem of H. Br\'ezis (see \cite{{Brezis1968}} or \cite[Theorem 8.1, p. 245]{Lions1969}), since $\K^s_g$ is a nonempty, closed and bounded  convex set of $H^s_0(\Omega)$ and the operator $P:H^s_0(\Omega)\longrightarrow H^{-s}(\Omega)$ defined by
		\begin{equation}\label{p}
			[Pu,v]_s=\mathscr L^s(u,v)-[F,v]_s,\quad u,v\in\K^s_g
		\end{equation}
		is pseudo-monotone in $\K^s_g$, i.e., if $u_n\underset{n}\lraup u$ in $H^s_0(\Omega)$, for $u_n,u\in\K^s_g$ with $\displaystyle\varlimsup_n[Pu_n,u_n-u]_s\le0$ then
		\begin{equation}\label{pseudo}
			\varliminf_n[Pu_n,u_n-v]_s\ge[Pu,u-v]_s,\quad\forall v\in\K^s_g.
		\end{equation}
		
		Indeed, taking $u_n\underset{n}\lraup u$ in $H^s_0(\Omega)$, which by compactness of the embedding \eqref{2.19} (for $p=2$ and $1\le q<2^*=\frac{2d}{d-2s}$ if $2s<d$, for all $q\ge1$ if $2s=d$ and for $q=\infty$ if $2s>d$), we may assume also that $u_n\underset{n}\rightarrow u$ in $L^q(\Omega)$. Write $P$ in the form
		\begin{equation*}
			[Pu,w]_s=\int_{\R^d}AD^su\cdot D^sw+[Bu,w]_s
		\end{equation*}
		with
		\begin{equation*}
			[Bu,w]_s=\int_{\R^d}(\bs du-\bs f)\cdot D^sw+\int_\Omega(\bs b\cdot D^su+cu-f_{\#})w.
		\end{equation*}
		It is then clear that the assumptions \eqref{assumptions2} and \eqref{assumptions_s2} imply
		\begin{equation*}
			[Bu_n,u_n-v]_s\underset{n}{\rightarrow}[Bu,u-v]_s,
		\end{equation*}
		as $D^su_n\underset{n}\lraup D^su$ in $\bs L^2(\R^d)$, and we have 
		\begin{equation*}
			\int_\Omega(\bs b+\bs d)u_n\cdot D^su_n\underset{n}\longrightarrow\int_\Omega(\bs b+\bs d)u\cdot D^su\quad \text{ and }\  \int_\Omega cu_n^2\underset{n}\longrightarrow \int_\Omega cu^2.
		\end{equation*}
		
		Hence \eqref{pseudo} follows easily by noting that the assumption \eqref{matrix} implies $\displaystyle\int_{\R^d}A D^s(u_n-u)\cdot D^s(u_n-u)\ge 0$, and hence it suffices to take the limit inferior in
		\begin{equation}\label{liminf}
			\int_{\R^d}AD^su_n\cdot D^s u_n\ge\int_{\R^d}AD^su_n\cdot D^su+\int_{\R^d}AD^su\cdot D^su_n-\int_{\R^d}AD^su\cdot D^su.
		\end{equation}
		
		In the non-Hilbertian case, we start by approximating the data, in the respective spaces,  by smooth functions  with compact support $A_m$, $\bs b_m$, $\bs d_m$, $c_m$, $f_{\# m}$ and $\bs f_m$ and we let $u_m$ be a solution of the variational inequality \eqref{iv} with these data, which exists by the previous case. 
		
		As $(D^su_m)_m$ is bounded in $\bs L^ \infty(\R^d)$ by Lemma \ref{klimitado}, using the compact embedding \eqref{Morrey}, there exist a $u\in\Lambda^{s,\infty}_0(\Omega)$ and a $G\in\bs L^ \infty(\R^d)$, such that, for some subsequence, $u_m\underset m\longrightarrow u$ strongly in $L^\infty(\Omega)$  and $D^su_m\underset m\lraup G$ in $\bs L^\infty(\R^d)\text{-weak}^*$.  Using the distributional nature of $D^s$, we easily see that $G=D^su$. 	Thus, as $\Lambda^{s,\infty}_0(\Omega)$ is continuously included in $\Lambda^{s,p}_0(\Omega)$, we have $D^su_m\underset{m}{\lraup} D^su$ in $\bs L^{p}(\R^d)\text{-weak}$, for any $p<\infty$, in particular for $p=p_1'$ and $p=q_1'$.
		
		Using the above convergences, we immediately have, for any $v\in\K^s_g$,
		\begin{multline*}
			\int_{\R^d}(\bs d_m u_m-\bs f_m)\cdot D^s(v-u_m)+\int_\Omega(\bs b_m\cdot D^su_m+ cu_m-f_{\#}m)(v-u_m)\\
			\underset m\longrightarrow\int_{\R^d}(\bs du-\bs f)\cdot D^s(v-u_m)+\int_\Omega(\bs b\cdot D^su+cu-f_{\#})(v-u_m).
		\end{multline*}
		On the other hand, using the monotonicity of $A_m$, for any $v\in\K^s_g$ we have
		\begin{equation*}
			\int_{\R^d}A_m D^su_m\cdot D^s(v-u_m)\le \int_{\R^d}A_m D^sv\cdot D^s(v-u_m).
		\end{equation*}
		Since
		$\displaystyle\int_{\R^d}A_m D^sv\cdot D^s(v-u_m) \underset m\longrightarrow\int_{\R^d}A D^sv\cdot D^s(v-u)$ then we get that, for any $v\in\K_g^s$,
		\begin{equation*}
			\int_{\R^d}A D^sv\cdot D^s(v-u)+	\int_{\Omega}\bs d u\cdot D^s(v-u)+\int_\Omega(\bs b\cdot D^su+cu)(v-u) \geq [ F,v-u].
		\end{equation*}
		Choosing $v=u+t(w-u)\in\K^s_g$,  for $t\in(0,1)$, as test function,  we obtain
		\begin{equation*}
			\mathscr L^s(u,w-u)\ge[F,w-u]_s,\quad\forall w\in\K_g^s,
		\end{equation*}
		after letting $t\longrightarrow 0^+$. Therefore $u$ solves \eqref{iv}.
	\end{proof}
	
	\begin{remark} To obtain the uniqueness to \eqref{iv} it suffices to require the strict positivity of the bilinear form
		\begin{equation*}
			\mathscr L^s(u-\widehat u,u-\widehat u)>0,\quad\forall u,\widehat u\in\K_g^s: u\neq\widehat u,
		\end{equation*}
		which needs stronger assumptions on its coefficients.
	\end{remark}
	
	\begin{remark}\label{rem3.4}
		The constrained problem \eqref{iv} for $u\in\K_g^s$ determines the existence of an element $\Gamma=\Gamma(u)\in H^{-s}(\Omega)$ belonging to the sub-differential of the indicatrix function $I_{\K_g^s}$ of the convex set $\K_g^s$, i.e. $I_{\K_g^s}(v)=0$ if $v\in\K_g^s$, $I_{\K_g^s}(v)=+\infty$ if $v\in H^s_0(\Omega)\setminus\K_g^s$, (see \cite[p.203]{Lions1969}), which is given by
		\begin{equation*}
			\Gamma\equiv F-\mathscr L^su\in\partial I_{\K_g^s}\quad\text{ in } H^s_0(\Omega),
		\end{equation*}
		where $\mathscr L^s:\K_g^s\longrightarrow H^{-s}(\Omega)$ is the linear operator defined by the bilinear form as in \eqref{bilinear}. A main question is to relate $\Gamma$ to the solution $u$, for instance trough the existence of a Lagrange multiplier $\lambda$ such that $\Gamma=\lambda D^su$. This has been shown only in very special cases with the classical gradient ($s=1$) (see \cite{Brezis72} and \cite{RodriguesSantos2019_0}, for more references).
	\end{remark}
	
	The existence result of Theorem \ref{teo3.1} includes the degenerate case $A\equiv0$ in \eqref{iv}. On the other, when the matrix $A$ is strictly elliptic, i.e., if we replace \eqref{matrix} by assuming the existence of $a_*>0$, such that
	\begin{equation}\label{Aelliptic}
		A(x)\bs\xi\cdot\bs\xi\ge a_*|\bs\xi|^2,\quad\forall\bs\xi\in\R^d,\ \text{ a.e. }x\in\R^d,
	\end{equation}
	we may give the following sufficient condition for the bilinear form \eqref{bilinear} to be strictly coercive, by imposing
	\begin{equation}\label{coercive}
		\delta\equiv a_*-C_*\Big(\|\bs b+\bs d\|_{\bs L^\frac{d}s(\Omega)}+C_*\|c^-\|_{L^\frac{d}{2s}(\Omega)}\Big)>0.
	\end{equation}
	Here $C_*$ is the Sobolev constant of the embedding $H^s_0(\Omega)\hookrightarrow L^{2^*}(\Omega)$ ($2^*=\frac{2d}{d-2s}$, $2s<d$) and $c^-=\max\{0,-c\}$.
	
	\begin{theorem}\label{teo3.5}
		Let $A\in L^\infty(\R^d)^{d^2}$, $\bs b,\bs d\in\bs L^{\frac{d}s}(\Omega)$, $c\in L^\frac{d}{2s}(\Omega)$ satisfy \eqref{Aelliptic} and \eqref{coercive} for $2s<d$, and $g\in L^2_{loc}(\Omega)$. Then, for any $f_{\#}$ and $\bs f$ satisfying \eqref{assumptions_s1}, there exists a unique solution to \eqref{iv}. If $\widehat u$ denotes the solution to \eqref{iv} for $\widehat f_{\#}$ and $\widehat{\bs f}$, we have
		\begin{equation}\label{estimate}
			\|u-\widehat u\|_{H^s_0(\Omega)}\le\tfrac{C_*}\delta\|f_{\#}-\widehat f_{\#}\|_{L^{2^\#}(\Omega)}+\tfrac1\delta\|\bs f-\widehat{\bs f}\|_{\bs L^2(\R^d)}.
		\end{equation}	
	\end{theorem}
	\begin{proof}
		Using H\"older and Sobolev inequalities, we have that, for $v\in H^s_0(\Omega)$,
		\begin{equation*}
			\Big|\int_\Omega(\bs b+\bs d)vD^sv\Big|
			\le\|\bs b+\bs d\|_{\bs L^{\frac{d}{s}}(\Omega)}\|v\|_{L^{2^*}(\Omega)}
			\|D^sv\|_{\bs L^2(\R^d)}
			\le C_*\|\bs b+\bs d\|_{\bs L^\frac{d}s(\Omega)}\|D^sv\|^2_{\bs L^2(\R^d)},
		\end{equation*}
		and
		\begin{equation*}
			-\int_\Omega cv^2\le\int_\Omega c^-v^2\le C_*^2\|c^-\|_{L^\frac{d}{2s}(\Omega)}\|D^sv\|^2_{\bs L^2(\R^d)}.
		\end{equation*}
		
		Therefore, using \eqref{Aelliptic} and \eqref{coercive}, we obtain
		\begin{equation}\label{3.21}
			\begin{aligned}
				\mathscr L^s(v,v)&=\int_{\R^d}AD^sv\cdot D^sv+\int_\Omega(\bs b+\bs d)v\cdot D^sv+\int_\Omega cv^2\\
				&\ge a_*\int_{\R^d}|D^sv|^2-\Big(C_*\|\bs b+\bs d\|_{\bs L^{\frac{d}{s}}(\Omega)}+C_*^2\|c^-\|_{L^\frac{d}{2s}(\Omega)}\Big)\|D^sv\|^2_{\bs L^2(\R^d)}\\
				&=\delta\|D^sv\|^2_{\bs L^2(\R^d)}=\delta\|v\|^2_{H^s_0(\Omega)}.
			\end{aligned}
		\end{equation}
		As $L^{2^*}(\Omega)=L^{2^{\#}}(\Omega)'$, \eqref{assumptions_s1} implies that $F\in H^{-s}(\Omega)$, with $\|F\|_{H^{-s}(\Omega)}\le C_*\|f_{\#}\|_{L^{2^{\#}}(\Omega)}+\|\bs f\|_{\bs L^2(\R^d)}$ and Stampacchia's theorem immediately yields the existence and uniqueness of the solution to \eqref{iv}.
		
		Taking $v=\widehat u$ in \eqref{iv} for $u$ and $v=u$ in \eqref{iv} for $\widehat u$, and using \eqref{3.21}, we obtain
		\begin{equation*}
			\delta\|u-\widehat u\|_{H^s_0(\Omega)}^2\le\mathscr L^s(u-\widehat u,u-\widehat u)\le[F-\widehat F,u-\widehat u]_s\le\|F-\widehat F\|_{H^{-s}(\Omega)}\|u-\widehat u\|_{H^s_0(\Omega)}
		\end{equation*}
		and \eqref{estimate}  easily follows.
	\end{proof}
	
	\begin{remark}
		We observe that the assumption \eqref{assumptions1} is slightly stronger than the integrability conditions in Theorem \ref{teo3.5} for the case $2s<d$, including $s=1$. However, for $s\ge\frac{d}2$, we may have the assumption \eqref{assumptions1} with any $r>1$, when $s=1$, $d=2$, and even $\bs b$, $\bs d$, $c\in L^1$ when $s\ge\frac12$, $d=1$, with the respective norms in the assumption \eqref{coercive}.
		Note that Theorem \ref{teo3.5} extends Theorem 2.1 of \cite{AzevedoRodriguesSantos2022}, in which the coefficients $\bs b$, $\bs d$ and $c$ are zero.
	\end{remark} 
	
	The coercivity assumption \eqref{coercive} in the case of a bounded threshold of the $s$-gradient, under the stronger assumptions
	\begin{equation}\label{g_*}
		0<g_*\le g(x)\le g^*\quad\text{ for a.e. }x\in\R^d,
	\end{equation}
	also yields strong continuous dependence of the solutions of \eqref{iv} with respect to the variation of the coefficients of $\mathscr L^s$, of the data and of the threshold $g$.
	In fact, the assumption \eqref{g_*} can be weakened as follows
	\begin{equation}\label{gweak}
		g\in L^\infty_{loc}(\R^d)\text{ with positive lower bound in any compact and }\lim_{|x|\rightarrow\infty}g(x)|x|^{d+s}=\infty,
		\tag*{(\theequation)$_{loc}$}
	\end{equation}
	as it will be shown in Proposition \ref{locallocal}.
	
	\begin{theorem}\label{teo3.7}
		Let $u_i$ denote the solution of \eqref{iv} corresponding to the data $A_i$, $\bs b_i$, $\bs d_i$, $c_i$, $f_{\# i}$, $\bs f_i$, $g_i$, for $i=1,2$, satisfying \eqref{Aelliptic}, \eqref{coercive}, \eqref{assumptions2}, \eqref{assumptions_s2} and \eqref{g_*}. Then the following estimate holds with $p>\frac{d}s$ and $0<\gamma=s-\frac{d}p<s\le 1$,
		\begin{align}\label{dep_cont}
			\begin{aligned}
				\|u_1-u_2\|^p_{C^{0,\gamma}(\overline\Omega)}&+\|u_1-u_2\|^2_{H^s_0(\Omega)}\le C_1\|g_1-g_2\|_{L^\infty(\R^d)}\\
				&	+C_1'\Big(\|A_1-A_2\|_{L^{p_1}(\R^d)^{d^2}}+\|\bs b_1-\bs b_2\|_{\bs L^1(\Omega)}+\|\bs d_1-\bs d_2\|_{\bs L^1(\Omega)}\\
				&	+\|c_1-c_2\|_{L^1(\Omega)}+\|f_{\# 1}-f_{\# 2}\|_{L^1(\Omega)}+\|\bs f_1-\bs f_2\|_{\bs L^{q_1}(\R^d)}\Big),
			\end{aligned}	
		\end{align}
		where the positive constants $C_1$ and $C_1'$ depend on $\delta$, $g_*$, $g^*$, $d$, $s$, $\Omega$ and linearly on the $L^1$-norms of $A_i$, $\bs b_i$, $\bs d_i$, $c_i$, $f_{\# i}$, $\bs f_i$.
	\end{theorem}
	\begin{proof}
		Since $u_i\in\K^s_{g_i}\subset\Lambda^{s,\infty}_0(\Omega)$ and $g_i$ satisfies \eqref{g_*} for each $i=1,2$, we have
		\begin{equation*}
			\tfrac{s}{C_0}\|u_i\|_{L^\infty(\Omega)}\le\|D^su_i\|_{\bs L^\infty(\R^d)}\le g^*,
		\end{equation*}
		where $C_0>0$ is the Poincar\'e constant in \eqref{PoincareSobolevBellido}.
		
		Set $\eta=\|g_1-g_2\|_{L^\infty(\R^d)}$ and $\mu=\frac{g_*}{g_*+\eta}$. Observe that $u_{i_j}=\mu u_j\in\K^s_{g_i}$ ($i\neq j$, $i,j=1,2$) and so it can be used as test function in \eqref{iv} for $\mathscr L^s_i$ and $f_{\# i}$, $\bs f_i$. For $i=1,2$, we obtain
		\begin{equation*}
			\mathscr L_i^s(u_i,u_{i_j}-u_i)\ge[F_i,u_{i_j}-u_i]_s,
		\end{equation*}
		or equivalently,
		\begin{equation}\label{li}
			\mathscr L^s_i(u_i,u_i-u_j)\le[F_i,u_i-u_j]_s+\mathscr L^s_i(u_i,u_{i_j}-u_i)+[F_i,u_j-u_{i_j}]_s.
		\end{equation}
		
		Since $u_{i_j}-u_j=(\mu-1)u_j$ and $0\le 1-\mu\le\frac\eta{g_*}$, setting $M=\max\{g^*,\frac{C_0}s g^*\}$, we may estimate the middle term of \eqref{li} by
		\begin{align*}
			\Big|\mathscr L_i^s(u_i,u_{i_j}-u_j)\Big|&=\Big|(\mu-1)\mathscr L_i^s(u_i,u_j)\Big|\le\tfrac\eta{g_*}\Big|\mathscr L_i^s(u_i,u_j)\Big|\\
			&\le\tfrac{\eta}{g_*}M^2\Big(\|A_i\|_{L^1(\R^d)^{d^2}}+\|\bs b_i\|_{\bs L^1(\Omega)}+\|\bs d_i\|_{\bs L^1(\Omega)}+\|c_i\|_{L^1(\Omega)}\Big)=\eta\kappa_i
		\end{align*}
		and the last one by
		\begin{equation*}
			\big|[F_i,u_j-u_{i_j}]_s\big|=(1-\mu)\big|[F_i,u_j]_s\big|\le\tfrac{\eta}{g_*}M\big(\|f_{\# i}\|_{L^1(\Omega)}+\|\bs f_i\|_{\bs L^1(\R^d)}\big)=\eta \nu_i.
		\end{equation*}
		
		Setting $w=u_1-u_2$ and
		\begin{multline*}
			E_{21}=\int_{\R^d}(A_2-A_1)D^su_2\cdot D^sw+\int_\Omega(\bs d_2-\bs d_1)u_2\cdot D^sw\\
			+\int_\Omega\big((\bs b_2-\bs b_1)\cdot D^su_2 +(c_2-c_1)u_2\big)w,
		\end{multline*}
		by using \eqref{li} for $i=2$, we have
		\begin{equation}\label{3.25}
			-\mathscr L_1^s(u_2,w)=\mathscr L_2^s(u_2,-w)+E_{21}\le[F_2,-w]_s+\eta(\kappa_2+\nu_2)+e_{21},
		\end{equation}
		where
		\begin{multline*}
			\big|E_{21}\big|
			\le 2M^2\Big(\|A_1-A_2\|_{L^1(\R^d)^{d^2}}+\|\bs d_1-\bs d_2\|_{\bs L^1(\Omega)}\\
			+\|\bs b_1-\bs b_2\|_{\bs L^1(\Omega)}+\|c_1-c_2\|_{L^1(\Omega)}\Big) \equiv e_{21}.
		\end{multline*}
		
		Summing \eqref{li} for $i=1$ with \eqref{3.25} and using the coercivity \eqref{3.21}, we obtain
		\begin{align*}
			\delta\|w\|^2_{H^s_0(\Omega)}&\le\mathscr L_1^s(w,w)=\mathscr L^s_1(u_1,w)-\mathscr L_1^s(u_2,w)\\
			&\le[F_1,w]_s+\eta(\kappa_1+\nu_1)+[F_2,-w]_s+\eta(\kappa_2+\nu_2)+e_{21}\\
			&\le\eta(\kappa_1+\nu_1+\kappa_2+\nu_2)+e_{21}+\varphi_{12},
		\end{align*}
		where $\big|[F_1-F_2,w]_s\big|\le\varphi_{12}$, with $\varphi_{12}=2M\big(\|f_{\# 1}-f_{\# 2}\|_{L^1(\Omega)}+\|\bs f_1-\bs f_2\|_{\bs L^1(\R^d)}\big)$.
		
		To conclude \eqref{dep_cont} it suffices to use the continuous embedding \eqref{Morrey}, which guarantees the existence of a constant $C_\beta>0$ and $0<\beta=s-\frac{d}{p}<s\le1$, with $p>\max\{2,\frac{d}s\}$ in
		\begin{equation*}
			\big(\tfrac1{C_\beta}\big)^p\|w\|^p_{C^{0,\beta}(\overline\Omega)}\le\int_{\R^d}|D^sw|^p\le \|D^sw\|_{\bs L^\infty(\R^d)}^{p-2}\int_{\R^d}|D^sw|^2\le \big(g^*\big)^{p-2}\|w\|^2_{H^s_0(\Omega)}.
		\end{equation*}
	\end{proof}
	
	The following proposition shows that we can replace the assumption \eqref{g_*} by \ref{gweak} in the above theorem.
	
	\begin{proposition}\label{locallocal}
		Let $g\in L_{loc}^\infty(\R^d)$ be positively lower bounded in any compact set and such that 
		\begin{equation}\label{inftyinfty}
			\lim_{|x|\rightarrow+\infty}g(x)|x|^{d+s}=\infty.
		\end{equation}
		
		Then there exists $h\in L^\infty(\R^d)$, with positive lower bound in $\R^d$ and such that $\K_h^s=\K_g^s$. More precisely, we can choose $h=g\,\chi_{\Omega_R}+k\chi_{\R^d\setminus\Omega_R}$ for a certain $R>0$ and $k\ge \|g\|_{L^\infty(\Omega_R)}$.
	\end{proposition}
	\begin{proof}
		Using remarks \ref{dsigma_infty}, \ref{bellido} and the inequality \eqref{1sobrep} for $p=1$, there exists
		$R_0=R_0(s)>0$, independent of $g$, such that, for $R\geq R_0$, and $u\in\K_g^s$, we have
		\begin{equation*}
			|D^s u(x)|\leq \tfrac{C_0}{s}\frac{2\mu_s |\Omega_R|}{d(x,\Omega)^{d+s}}\|g\|_{L^\infty(\Omega_R)}, \qquad\forall x\not\in \Omega_R.
		\end{equation*}
		
		Using \eqref{inftyinfty}, fix $R\geq R_0$ such that
		\begin{equation}\label{minoracao_de_g}
			g(x)d(x,\Omega)^{d+s}\geq \tfrac{C_0}{s}2\mu_s |\Omega_{R_0}|\|g\|_{L^\infty(\Omega_{R_0})},\qquad 	\forall x\not\in \Omega_{R}	\end{equation}
		and
		\begin{equation*}
			\tfrac{C_0}{s}\frac{2\mu_s |\Omega_{R}|}{d(x,\Omega)^{d+s}}\leq 1.
		\end{equation*}
		
		Let $k\geq \|g\|_{L^\infty(\Omega_R)}$	and consider  $h:\R^d\longrightarrow\R$ 
		\begin{equation}\label{h}
			h(x)=\begin{cases}
				g(x)&\text{ if }x\in\Omega_R,\\
				k&\text{ otherwise.}
			\end{cases}
		\end{equation}
		Then $h$ satisfies assumption \eqref{g_*} and $\K_g^s=\K_h^s$. Indeed, if $u\in\K_g^s$ then, for $x\not\in \Omega_{R}$, 
		\begin{equation*}
			|D^s u(x)|\leq 
			\tfrac{C_0}{s}\frac{2\mu_s |\Omega_{R}|}{d(x,\Omega)^{d+s}}\|g\|_{L^\infty(\Omega_R)}\leq 	\tfrac{C_0}{s}\frac{2\mu_s |\Omega_R|}{R^{d+s}}\|g\|_{L^\infty(\Omega_R)}\leq k.
		\end{equation*}
		
		Reciprocally, if $u\in\K_h$ then, for $x\not\in \Omega_{R}$, we have  $x\not\in \Omega_{R_0}$ and then, as $ \|h\|_{L^\infty(\Omega_{R_0})}=\|g\|_{L^\infty(\Omega_{R_0})}$,
		\begin{equation*}
			|D^s u(x)|  \leq  	\tfrac{C_0}{s}\frac{2\mu_s |\Omega_{R_0}|}{d(x,\Omega)^{d+s}}\|g\|_{L^\infty(\Omega_{R_0})}
		\end{equation*}
		and then, using \eqref{minoracao_de_g}, $ |D^s u(x)|\leq  g(x)$. 
	\end{proof}
	\begin{remark} 
		Assuming only \ref{gweak}, Theorem \ref{teo3.7} remains valid by taking $R>0$ sufficiently large and replacing $\|g_1-g_2\|_{L^\infty(\R^d)}$ by  $\|g_1-g_2\|_{L^\infty(\Omega_R)}$  in \eqref{dep_cont}. 
		
		This is true because in the proof of the last  proposition for $g_1$ and $g_2$, we can define $h_1$ and $h_2$ as in \eqref{h} with the same $R$ and $h_1$ equal to $h_2$ outside $\Omega_R$.
	\end{remark}
	
	\begin{remark}
		If we assume \eqref{matrix}, $\bs b=\bs d=0$ and  $c\geq c_*>0$ instead \eqref{coercive}, keeping the other assumptions in Theorem \ref{teo3.7}, we still have a weaker continuous dependence result, replacing 
		$\|u_1-u_2\|^p_{C^{0,\gamma}(\overline\Omega)}+\|u_1-u_2\|^2_{H^s_0(\Omega)}$ by $\|u_1-u_2\|^2_{L^2(\Omega)}$ in \eqref{dep_cont}. In particular, with these assumptions we also have uniqueness of solution for the variational inequality.
	\end{remark}
	
	\section{Transport potentials and densities}
	\label{section4}
	
	In this section we consider the Lagrange multiplier problem for $0<s\le1$, associated with bounded $s$-gradient constraints: find the generalised transport potential-density pair $(u,\lambda)\in\Lambda^{s,\infty}_0(\Omega)\times L^\infty(\R^d)'$, such that
	\begin{subequations}\label{ml}
		\begin{align}
			&\mathscr L^s(u,v)+\bsl\lambda D^su,D^s v\bsr=[F,v]_s,\quad \forall v\in\Lambda^{s,\infty}_0(\Omega)\label{mla} \\
			&|D^su|\le g\text{ a.e. in }\R^d,\quad\lambda\ge0\quad \text { and } \quad \lambda(|D^su|-g)=0 \quad \text{ in } L^\infty(\R^d)'.\label{mlb}
		\end{align}
	\end{subequations}
	
	In the case $s=1$ the solution $(u,\lambda)$ is to be found in $W^{1,\infty}_0(\Omega)\times L^\infty(\Omega)'$ and the test functions $v$ in $W^{1,\infty}_0(\Omega)$, since $D^1=D$ is the classical gradient.
	
	Here $\bsl\,\cdot\,,\,\cdot\,\bsr$ denotes the duality pairing between $\bs L^\infty(\R^d)'$ and $\bs L^\infty(\R^d)$ and we set, for $\bs\varphi\in \bs L^\infty(\R^d)$,
	\begin{equation*}
		\bsl\lambda\bs\varphi,\bs\xi\bsr=\langle\lambda,\bs\varphi\cdot\bs\xi\rangle,\quad\forall\bs\xi\in\bs L^\infty(\R^d),
	\end{equation*}
	where $\langle\,\cdot\,,\,\cdot\,\rangle$ is the duality pairing between $L^\infty(\R^d)'$ and $L^\infty(\R^d)$.
	\begin{theorem}\label{teo4.1}
		Assume $g$ satisfies \eqref{g_*}, $\mathscr L^s$ is given by \eqref{bilinear} with the assumptions \eqref{matrix}, \eqref{assumptions2} and $F\in \Lambda^{s,\infty}_0(\Omega)'$ is given by \eqref{fs} if $0<s<1$ (respectively $F\in W^{1,\infty}_0(\Omega)'$, if $s=1$) with the assumption \eqref{assumptions_s2}. 
		Then problem \eqref{ml} has a solution $(u,\lambda)\in \Lambda^{s,\infty}_0(\Omega)\times L^\infty(\R^d)'$, for any $0<s<1$ (respectively in $W^{1,\infty}_0(\Omega)\times L^\infty(\Omega)'$ if $s=1$), and $u$ solves the variational inequality \eqref{iv}.
	\end{theorem}
	
	The proof of this existence theorem is obtained by a suitable penalisation of the $s$-gradient constraint, combined with an elliptic nonlinear regularisation, and by a weak stability property of the generalised formulation \eqref{ml} given by the following theorem.
	
	Consider for $0<\nu<1$ the family of solutions $(u_\nu,\lambda_\nu)\in \Lambda^{s,\infty}_0(\Omega)\times L^\infty(\R^d)'$, if $0<s<1$ (respectively, $W^{1,\infty}_0(\Omega)\times L^\infty(\Omega)'$ if $s=1$) 
	\addtocounter{equation}{-1}\begin{subequations}	
		\begin{align}
			&\mathscr L_\nu^s(u_\nu,v)+\bsl\lambda_\nu D^su_\nu,D^s v\bsr=[F_\nu,v]_s,\quad \forall v\in\Lambda^{s,\infty}_0(\Omega)
			\label{mlxa}	\tag*{(\theequation a)$_{\nulab}$}\\
			%	\refstepcounter{equation}
			&|D^su_\nu|\le g_\nu\text{ a.e. in }\R^d,\quad\lambda_\nu\ge0\text{ and }\lambda_\nu(|D^su_\nu|-g_\nu)=0\ \text{ in }L^\infty(\R^d)',
			\label{mlxb}	\tag*{(\theequation b)$_{\nulab}$}
		\end{align}
	\end{subequations}
	where $\mathscr L_\nu^s$ and $F_\nu$ are defined by the \eqref{bilinear} and \eqref{assumptions1} with data $A_\nu$, $\bs b_\nu$, $\bs d_\nu$, $c_\nu$ and $f_{\#,\nu}$, $\bs f_\nu$, respectively.
	
	\begin{theorem}\label{teo4.2}
		Suppose the functions $A_\nu$, $\bs b_\nu$, $\bs d_\nu$, $c_\nu$, $f_{\#,\nu}$, $\bs f_\nu$ and $g_\nu$, for each $\nu$, $0<\nu<1$, satisfy \eqref{g*} \eqref{matrix}, \eqref{assumptions2}, \eqref{assumptions_s2} and \eqref{g_*} and have limit functions as $\nu\longrightarrow 0$: 
		\begin{subequations}\label{abnu}
			\begin{align}
				\label{abnuA}	& A_\nu\longrightarrow A\ \text{in $L^{p_1}(\R^d)^{d^2}$},\quad c_\nu \longrightarrow c \ \text{in $L^1(\Omega)$},\\
				\label{abnubd}		&\bs b_\nu\longrightarrow \bs b\ \text{in $\bs L^1(\Omega)$},\quad \bs d_\nu \longrightarrow \bs d \ \text{in $\bs L^1(\Omega)$},\\
				\label{abnuff}		& f_{\#\nu}\longrightarrow f_\#\ \text{in $L^1(\Omega)$},\quad \bs f_\nu \longrightarrow \bs f \ \text{in $\bs L^{q_1}(\R^d)$},\\
				\label{abnug}		& g_\nu\longrightarrow g\ \text{in $L^\infty(\R^d)$}.
			\end{align}
		\end{subequations}
		
		Then, if $(u_\nu,\lambda_\nu)$ solves {\em \ref{mlxa}\ref{mlxb}}, there is a generalised sequence, still denoted by $\nu\longrightarrow 0$, and a solution $(u,\lambda)$ to \eqref{mla}\eqref{mlb} such that 
		\begin{subequations}\label{solutionabnu}
			\begin{align}
				\label{solutionabnua}&u_\nu\longrightarrow u\text{ in }C^{0,\alpha}(\overline\Omega)\text{-strong}\\
				& D^su_\nu\lraup D^su\text{ in }L^\infty(\Omega)\text{-weak}^*,\\
				\label{solutionabnub}&\lambda_\nu\lraup\lambda\text{ in }L^\infty(\R^d)'\text{-weak}^*,
			\end{align}
		\end{subequations}
		where $0<\alpha<s\le1$, with the convention $\Lambda^{s,\infty}_0(\Omega)=W^{1,\infty}(\Omega)$ in the case $s=1$.
	\end{theorem}
	\begin{proof}
		Since $|D^su_\nu|\le g_\nu\le g^*$ a.e. in $\R^d$, for all $0<\nu<1$, by recalling \eqref{2.21} and \eqref{Morrey}, respectively, we have the {\em a priory} estimates
		\begin{equation}\label{estimates_nu}
			\|D^s u_\nu\|_{\bs L^\infty(\R^d)}\le g^*,\quad\text{and}\quad\|u_\nu\|_{C^{0,\beta}(\overline\Omega)}\le C_\beta,\ \text{ for all }\ 0<\beta<s\le1.	
		\end{equation}
		
		Taking $v=u_\nu$ in \ref{mlxa} we get
		\begin{align}\label{4.11}
			\begin{aligned}
				\langle\lambda_\nu,|D^s u_\nu|^2\rangle&=\bsl\lambda_\nu D^su_\nu,D^su_\nu\bsr=[F_\nu,u_\nu]_s-\mathscr L^s_\nu(u_\nu,u_\nu)\\
				&\le C_1,\quad\text{for all } \sigma<s\le1,
			\end{aligned}
		\end{align}
		where $C_1>0$ is a constant dependent on $g^*$, $C_\beta$ and a common bound of the $L^{p_1}$, $L^1$, and $L^{q_1}$ norms of $(A_\nu)_{\nu}$, $(\bs b_\nu)_{\nu}$, $(\bs d_\nu)_{\nu}$, $(c_\nu)_{\nu}$, $( f_{\#\nu})_{\nu}$ and ${(\bs f_\nu)}_{\nu}$, respectively, independent of $\nu$.
		
		Observing that, as $(u_\nu,\lambda_\nu)$ solves problem \ref{mlxa}\ref{mlxb}, we have $\lambda_\nu(|D^su_\nu|-g_\nu)=0$ in $L^\infty(\R^d)'$, which, multiplying by $|D^su_\nu|+g_\nu$, implies
		\begin{equation}\label{4_7}
			\langle\lambda_\nu,|D^su_\nu|^2\rangle=\langle\lambda_\nu,g_\nu^2\rangle.
		\end{equation}
		Using the assumption $g_\nu\ge g_*$  and $\lambda_\nu\ge0$ we have
		\begin{align}\label{4.12}
			\begin{aligned}
				\|\lambda_\nu\|_{L^\infty(\R^d)'} & = \sup_{\|\zeta\|_{L^\infty(\R^d)}\le1 }|\langle\lambda_\nu,\zeta\rangle|\leq  \sup_{\|\zeta\|_{L^\infty(\R^d)}\le1 }\langle\lambda_\nu,|\zeta|\rangle\leq\langle\lambda_\nu,1\rangle\leq\langle \lambda_\nu,\tfrac{g_\nu^2}{g_*^2}\rangle\\
				&=\tfrac{1}{g_*^2}\langle\lambda_\nu,|D^su_\nu|^2\rangle
				\leq \tfrac{C_1}{g_*^2},\quad\text{ by \eqref{4_7} and \eqref{4.11}}.
			\end{aligned}
		\end{align}
		
		As a consequence, letting $\bs\Psi_\nu=\lambda_\nu D^su_\nu$, we also have
		\begin{equation}\label{4.13}	
			\|\bs\Psi_\nu\|_{\bs L^\infty(\R^d)'} =\sup_{\|\bs \xi\|_{\bs L^\infty(\R^d)}\le1}|\langle\lambda_\nu,D^s u_\nu\cdot\bs\xi\rangle|\le
			\|\lambda_\nu\|_{L^\infty(\R^d)'}\|D^su_\nu\|_{\bs L^\infty(\R^d)}\leq\tfrac{C_1 g^*}{g_*^2}.
		\end{equation}

		Then, by the above estimates, we may choose some generalised sequence $\nu\longrightarrow0$, such that: i) \eqref{solutionabnua} holds with $\alpha<\beta$ for some $u\in C^{0,\alpha}(\overline\Omega)$  and \eqref{solutionabnub} holds, using the distributional nature of $D^s$, since \eqref{estimates_nu} is satisfied and  ii) by the Banach-Alaoglu-Bourbaki theorem, \eqref{solutionabnub} holds for some $\lambda\in L^\infty(\R^d)'$ (by estimate \eqref{4.12}) and there exists a $\bs\Psi\in\bs L^\infty(\R^d)'$ (by \eqref{4.13}) with
		\begin{equation}\label{psinu}
			\bs\Psi_\nu\underset{\nu}\lraup\bs\Psi\quad\text{ in }\bs L^\infty(\R^d)'.
		\end{equation}
		
		Letting $\nu\longrightarrow0$ in \ref{mlxa}, by the assumptions \eqref{abnuA}, \eqref{abnubd}, \eqref{abnuff} and the convergences \eqref{solutionabnua} and \eqref{solutionabnub}, we conclude that $(u,\bs\Psi)$ solves:
		\begin{equation*}
			\mathscr L^s(u,v)+\bsl\bs\Psi,D^sv\bsr=[F,v]_s,\qquad\forall v\in \Lambda^{s,\infty}_0(\Omega).
		\end{equation*}
		
		Note that $\lambda_\nu\ge0$ implies $\lambda\ge0$. Given any measurable set $\omega\subset\R^d$ with finite measure, taking $\bs\xi\in\bs L^1(\R^d)$, defined by $\bs\xi=\frac{D^su}{|D^su|}$ if $x\in\omega\cap\{|D^su|\neq0\}$ and $\bs\xi=0$ elsewhere, since $D^su_\nu\underset\nu\lraup D^su$ in $\bs L^\infty(\R^d)\text{-weak}^*$, we have
		\begin{equation}\label{pertence_convexo}
			\int_\omega g_\nu\geq\int_{\omega}|D^s u_\nu|\ge\int_\omega D^su_\nu\cdot\bs\xi\underset\nu\longrightarrow\int_{\R^d}D^su\cdot\bs\xi=\int_\omega|D^su|,
		\end{equation} 
		and so  $|D^s u|\le g$ a.e. in $\R^d$, by \eqref{abnug} and the arbitrariness of $\omega\subset\R^d$. Then, in order to complete the proof, it remains to show that
		\begin{equation}\label{psilambda}
			\bs\Psi=\lambda D^su\qquad\text{ in }\bs L^\infty(\R^d)'
		\end{equation}
		and
		\begin{equation}\label{lambdag}
			\lambda|D^su|=\lambda g\qquad\text{ in } L^\infty(\R^d)',
		\end{equation}
		or equivalently, using the assumption \eqref{g_*},
		\begin{equation*}
			\langle\lambda(g-|D^su|),\varphi\rangle=\langle\lambda,(g^2-|D^su|^2)\tfrac{\varphi}{g+|D^su|}\rangle=0,\quad\forall\varphi\in L^\infty(\R^d).
		\end{equation*}
		
		Then we observe that, recalling $|D^su|\le g$ and using \eqref{4.12},
		\begin{align}\label{4.20}
			\begin{aligned}
				\tfrac{1}{2}	\langle\lambda_\nu ,|D^s (u_\nu -u)|^2\rangle&=\tfrac{1}{2}\Big(\langle\lambda_\nu ,|D^s u_\nu |^2\rangle-2\langle\lambda_\nu ,D^s u_\nu \cdot D^s u\rangle+\langle\lambda_\nu ,|D^s u|^2\rangle\Big)\\
				&\leq \langle\lambda_\nu ,|D^s u_\nu |^2\rangle-\langle\lambda_\nu ,D^s u_\nu \cdot D^s u\rangle+\tfrac12\langle\lambda_\nu,g^2-g_\nu^2\rangle\\
				&\le\langle\lambda_\nu D^s u_\nu , D^s (u_\nu -u)\rangle+\tfrac12\|\lambda_\nu\|_{L^\infty(\R^d)'}\|g^2-g_\nu^2\|_{L^\infty(\R^d)}\\
				&\le[F_\nu ,u_\nu -u]_s-\opL^s_\nu  (u_\nu ,u_\nu -u)+\tfrac{1}{2}\tfrac{C_1}{g_*^2}\|g^2-g_\nu^2\|_{L^\infty(\R^d)}.
			\end{aligned}
		\end{align}

		We have $[F_\nu,u_\nu-u]_s\underset\nu\longrightarrow0$ and $\opL^s_\nu  (u_\nu,u)\underset\nu\longrightarrow\opL^s(u,u)$, while, on the other hand,
		\begin{equation}\label{4.21}
			\varliminf_\nu\opL^s_\nu(u_\nu,u_\nu)\ge\opL^s(u,u)
		\end{equation}
		since, arguing as in \eqref{liminf} we have that, using \eqref{abnuA},
		\begin{align*}
			\varliminf_\nu\int_{\R^d}A_\nu D^su_\nu\cdot D^su_\nu&\ge\varliminf_\nu\int_{\R^d} AD^su_\nu\cdot D^su_\nu+\lim_\nu\int_{\R^d}(A_\nu-A)D^su_\nu\cdot D^su_\nu\\
			&\ge\int_{\R^d}AD^su\cdot D^su
		\end{align*}
		as
		\begin{equation*}
			\Big|\int_{\R^d}(A_\nu-A)D^su_\nu\cdot D^su_\nu\Big|\le(g^*)^2\|A_\nu-A\|_{L^1(\R^d)^{d^2}}\underset\nu\longrightarrow0
		\end{equation*}
		and, using the convergences \eqref{solutionabnua} with \eqref{abnuA} and \eqref{abnubd},
		\begin{equation*}
			\int_\Omega\bs b_\nu\cdot D^su_\nu(u_\nu-u)+\bs d_\nu u_\nu\cdot D^s(u_\nu-u)+c_\nu u_\nu(u_\nu-u)\underset\nu\longrightarrow0.
		\end{equation*}
		
		Hence we conclude that
		\begin{equation*}
			0\le\lim_\nu\langle\lambda_\nu,|D^s(u_\nu-u)|^2\rangle\le\varlimsup_\nu\langle\lambda_\nu,|D^s(u_\nu-u)|^2\rangle\le0.
		\end{equation*}
		By the H\"older inequality (see Proposition \ref{desigHolder}),
		\begin{align*}
			|\langle \lambda_\nu , D^s(u_\nu -u)\cdot \bs\xi\rangle| &\le \langle \lambda_\nu , |D^s(u_\nu -u)|| \bs\xi|\rangle\le\langle\lambda_\nu ,|D^s(u_\nu -u)|^2\rangle^\frac12\langle\lambda_\nu ,|\bs\xi|^2\rangle^\frac12\\
			&\leq \langle\lambda_\nu ,|D^s(u_\nu -u)|^2\rangle^\frac12\|\lambda_\nu\|_{L^\infty(\Omega)'}^\frac{1}{2}\|\bs \xi\|_{\bs L^\infty(\R^d)}
		\end{align*}
		which by \eqref{4.12} yields
		\begin{equation}\label{423}
			\lim_\nu\langle\lambda_\nu,D^s(u_\nu-u)\cdot\bs\xi\rangle=0,\qquad\forall\bs\xi\in\bs L^\infty(\R^d).
		\end{equation}
		
		Now, recalling \eqref{psinu}, \eqref{psilambda} follows now from \eqref{423}, since
		\begin{align}\label{4.24}
			\begin{aligned}
				\bsl\bs\Psi,\bs\xi\bsr&=\lim_\nu  \bsl\bs\Psi_\nu ,\bs\xi\bsr=\lim_\nu \langle \lambda_\nu , D^s u_\nu \cdot\bs\xi\rangle=\lim_\nu \langle \lambda_\nu , D^s u\cdot\bs\xi\rangle\\
				&=\langle \lambda, D^s u\cdot\bs\xi\rangle=\bsl\lambda D^s u,\bs\xi\bsr\qquad\forall\bs\xi\in\bs L^\infty(\R^d).
			\end{aligned}
		\end{align}
		
		Using \eqref{423} and \eqref{4.24} with $\bs\xi=D^su_\nu$ we obtain $\lim_\nu\bsl\lambda_\nu D^su_\nu,D^s(u_\nu-u)\bsr=0$ and 
		\begin{equation}\label{4.25}
			\langle\lambda,g^2\rangle=\lim_\nu\langle\lambda_\nu,g_\nu^2\rangle=\lim_\nu\bsl\lambda_\nu D^su_\nu,D^su_\nu\bsr=\lim_\nu\bsl\lambda_\nu D^su_\nu,D^su\bsr=\langle\lambda,|D^su|^2\rangle,
		\end{equation}
		which implies $\langle\lambda(g^2-|D^su|^2),1\rangle=0$.
		
		Finally, \eqref{lambdag} follows again by the H\"older inequality for charges, with an arbitrary $\varphi\in L^\infty(\R^d)$,
		\begin{align}\label{4.26}
			\begin{aligned}
				|\langle\lambda(g-|D^su|),\varphi\rangle| & =\big|\langle\lambda(g^2-|D^su|^2),\tfrac{\varphi}{g+|D^su|}\rangle\big|\\
				& \le\langle\lambda(g^2-|D^su|^2),1\rangle^\frac12\langle\lambda(g^2-|D^su|^2),\tfrac{|\varphi|^2}{(g+|D^su|)^2}\rangle^\frac12=0.
			\end{aligned}
		\end{align}
	\end{proof}
	\begin{proof} (of Theorem \ref{teo4.1})
		It can be obtained with the following approximating problem. Let $0<\eps<1$ and fix $q>1+\frac{d}s>2$, so that $\Lambda ^{s,r}_0(\Omega)\subset C(\overline\Omega)$, for $0<s\le1$ and $r=q-1>\frac{d}{s}$, recalling the convention $\Lambda^{s,r}_0(\Omega)=W^{1,r}_0(\Omega)$ if $s=1$. We firstly consider $\opL^s$ and $F$ defined by \eqref{bilinear} and \eqref{fs} under the assumption \eqref{matrix} and, letting $r'=\tfrac{r}{r-1}$,
		\begin{equation}\label{dados_regulares}
			A\in L^\infty(\R^d)^{d^2},\quad\bs b,\bs d\in \bs L^{r'}(\Omega),\quad \bs f\in \bs L^{q'}(\R^d),\quad 
			c,\ f_{\#}\in L^1(\Omega),
		\end{equation}
		with which we shall prove the existence of a solution $(u,\lambda)\in\Lambda^{s,\infty}_0(\Omega)\times L^\infty(\R^d)'$, $0<s\le1$ of \eqref{ml}.
		
		The approximating problem is given, for $\eps>0$, by
		\begin{equation}\label{prob_ap}
			u_\eps\in\Lambda^{s,q}_0(\Omega):\quad\opL^s(u_\eps,v)+[\widetilde k_\eps(u_\eps)+\eps D_q^s u_\eps,v]=[F,v],\qquad \forall v\in\Lambda^{s,q}_0(\Omega),
		\end{equation}
		where $\eps>0$ and the penalisation operator
		\begin{equation*}
			[\widetilde k_\eps(w),v]=\int_{\R^d}k_\eps(|D^sw|-g)D^sw\cdot D^sv,\qquad\forall v,w\in\Lambda^{s,q}_0(\Omega),
		\end{equation*}
		is defined with the continuous monotone function
		\begin{equation*}
			k_\eps(t)=0\ \text{ for }t\le0,\quad k_\eps(t)=e^\frac{t}\eps-1\ \text{ for } 0<t\le\tfrac1\eps\quad\text{ and }\quad k_\eps(t)=e^\frac1{\eps^2}-1\ \text{ for }t\ge\tfrac1\eps
		\end{equation*}
		and the nonlinear elliptic regularisation is given by
		\begin{equation*}
			[\eps D^s_qw,v]=\eps\int_{\R^d}|D^sw|^{q-2}D^sw\cdot D^sv,\qquad\forall v,w\in\Lambda^{s,q}_0(\Omega).
		\end{equation*}
		
		As in Lemma \ref{klimitado}, the nonlinear operator $[B_\eps u,v]=\opL^s(u,v)+[\widetilde k_\eps(u)+\eps D_q^s u_\eps,v]$ is easily seen to be pseudo-monotone in $\Lambda^{s,q}_0(\Omega)$ and also coercive (see \cite{Lions1969} or \cite{Roubicek2013}), since, setting $\|v\|=\|D^sv\|_{\bs L^q(\R^d)},$
		\begin{align*}
			\frac{[B_\eps v,v]}{\|v\|}&=\frac1{\|v\|}\bigg(\int_{\R^d}\big(\eps|D^s_q v|^q+AD^sv\cdot D^sv+k_\eps(|D^sv|-g)|D^sv|^2\big)\\
			&\quad+\int_\Omega\big(v(\bs d+\bs b)\cdot D^sv+cv^2\big)\bigg)\\
			&\ge \eps\|D^sv\|^{q-1}_{\bs L^q(\Omega)}-\tfrac{\|v\|_{L^\infty(\Omega)}}{\|v\|}
			\Big(\|\bs d+\bs b\|_{\bs L^{r'}(\Omega)}\|D^sv\|_{\bs L^r(\Omega)}+\|c\|_{L^1(\Omega)}\|v\|_{L^\infty(\Omega)}\Big)\\
			&\ge \|v\|\big(\eps\|v\|^{q-1}-\widetilde C_q\big)\longrightarrow\infty\quad\text{ as }\|v\|\longrightarrow\infty,
		\end{align*}
		with $\widetilde C_q=C_{r,q}C_q\big(\|\bs d+\bs b\|_{\bs L^{r'}(\Omega)}+C_q\|c\|_{L^1(\Omega)}\big)$, where $C_{r,q}$ is given by \eqref{PoincareSobolevLambda} and $C_q>0$ is given by the continuous embedding \eqref{2.19}, i.e., such that $\|v\|_{L^\infty(\Omega)}\le C_q\|D^sv\|_{\bs L^q(\R^d)}$, $v\in \Lambda^{s,q}_0(\Omega)$.
		
		Then, by the theory of pseudo-monotone and coercive operators (see \cite{Lions1969} or \cite{Roubicek2013}, for instance), since $F\in\Lambda^{s,q}_0(\Omega)'$, there exists $u_\eps\in\Lambda^{s,q}_0(\Omega)$ solving \eqref{prob_ap}. Taking $v=u_\eps$ in \eqref{prob_ap} and setting $\widehat k_\eps=k_\eps(|D^su_\eps|-g)$,
		\begin{align}\label{4.30}
			\begin{aligned}
				\int_{\R^d}\widehat k_\eps |D^s   u_{\eps}|^2+\eps & \int_{\R^d}|D^s   u_{\eps}|^q\le
				\widetilde C_r\|D^su_\eps\|_{\bs L^r(\R^d)}^2\\
				&+\Big(C_r\|f_\#\|_{L^1(\Omega)}+\|\bs f\|_{\bs L^{q'}(\R^d)}\Big)\|D^su_\eps\|_{\bs L^q(\R^d)}\\
				&\le \widetilde C_r'\|D^su_\eps\|_{\bs L^r(\R^d)}^2\le \widetilde C_r'C^2_{r,q}\|D^su_\eps\|_{\bs L^q(\R^d)}^2
			\end{aligned}
		\end{align}
		with $\widetilde C_r=C_r\big(\|\bs d+\bs b\|_{\bs L^{q'}(\Omega)}+C_r\|c\|_{L^1(\Omega)}\big)$ by assuming, without loss of generality, that \linebreak$\|D^su_\eps\|_{\bs L^r(\R^d)}\ge1$, which will allow the proof of the following {\em a priori} estimates independent of $\eps$, $\eps$ sufficiently small,
		\begin{equation}\label{4.31}
			\|D^su_\eps\|_{\bs L^r(\R^d)}\le C,
		\end{equation}
		\begin{equation}\label{4.32}
			\|\widehat k_\eps|D^su_\eps|^2\|_{ L^1(\R^d)}\le C\quad\text{ and }\quad \|k_\eps\|_{ L^1(\R^d)}\le C,
		\end{equation}
		\begin{equation}\label{4.33}
			\|\widehat k_\eps D^su_\eps\|_{\bs L^\infty(\R^d)'}\le C.
		\end{equation}
		
		From \eqref{4.30}, there exists $C>0$, independent of $\eps$, such that $\|D^su_\eps\|_{\bs L^q(\R^d)}\le C\eps^{-1/(q-2)}$ and consequently also
		\begin{equation}\label{4.34}
			g_*^2	\|\widehat k_\eps\|_{ L^1(\R^d)}\le\|\widehat k_\eps|D^su_\eps|^2\|_{ L^1(\R^d)}\le C\eps^{-\frac{2}{q-2}},
		\end{equation}
		since, as $\widehat k_\eps=0$ if $|D^su_\eps|<g$, we have $\widehat k_\eps|D^su_\eps|^2\ge g^2\widehat k_\eps\ge g_*^2\widehat k_\eps$.
		
		Now we split $\R^d$ in two subsets,
		\begin{equation}\label{UepsVeps}
			U_{\eps} = \{x\in \R^d: |D^s   u_{\eps}|-g\leq \sqrt\eps\}\quad\text{and}\quad V_{\eps}=\R^d\setminus U\eps
		\end{equation}
		and we observe that, as $k_\eps $ is a monotone function, in $V_{\eps}$ we have $\widehat k_\eps =k_\eps (|D^s u_{\eps }|-g)\geq  k_\eps (\sqrt\eps)=e^\frac1{\sqrt\eps}-1$ and 
		\begin{align}
			\label{veps}		&|V_{\eps}|=\int_{V_{\eps}}1\leq \int_{V_{\eps}}\frac\ked{e^\frac1{\sqrt\eps}-1}\le\tfrac1{e^\frac1{\sqrt\eps}-1}\int_{\R^d}\ked\le \tfrac{C}{\eps^{\frac2{q-2}}\big(e^\frac1{\sqrt\eps}-1\big)}\underset{\eps\rightarrow0}{\longrightarrow}0,\\[2mm]
			\nonumber		&\int_{V_\eps}|D^s  u_{\eps}|^{r}\le\bigg(\int_{\R^d}|D^s  u_{\eps}|^{q}\bigg)^\frac{r}{q}|V_\eps|^\frac{q-r}{q}\le C\Big(\eps^{-\frac1{q-2}}\Big)^\frac{r}{q}\bigg(\tfrac{1}{\eps^{\frac2{q-2}}\big(e^\frac1{\sqrt\eps}-1\big)}\bigg)^\frac{q-r}{q} \underset{\eps\rightarrow0}{\longrightarrow}0.       
		\end{align}
		
		Given $R>0$,
		\begin{align*}
			&\int_{U_{\eps}\cap \Omega_R}|D^s  u_{\eps}|^{r}\leq \int_{U_{\eps}\cap \Omega_R}(g+\sqrt\eps)^{r}\le \big(g^*+1\big)^{r}|\Omega_R|,\\[2mm]
			&\int_{U_{\eps}\setminus \Omega_R}|D^s  u_{\eps}|^r\leq 
			C(R)\|u_{\eps}\|_{L^1(\Omega)}^r\leq 
			C(R)C_{r,1}^r   \|D^s  u_{\eps}\|_{L^r(\R^d)}^r,
		\end{align*}
		using \eqref{trudinger} and \eqref{desigualdadeomegaR}, with $C$ representing different constants. Then, for $\eps$ small enough,
		\begin{equation*}
			\int_{\R^d}|D^s  u_{\eps}|^r\le \big(g^*+1\big)^{r}|\Omega_R|+ C(R)C_{r,1}^r   \|D^s  u_{\eps}\|_{L^r(\R^d)}^r+1.
		\end{equation*}
		
		Choosing $R_0>0$ such that $C(R_0)C_{r,1}^r\leq \frac{1}{2}$ we get \eqref{4.31} from 
		\begin{equation*}
			\|D^s  u_{\eps}\|_{\bs L^r(\R^d)}\le 2^{\frac{1}{r}}\big((g^*+1)^{r}|\Omega_{R_0})|+1\big)^{\frac{1}{r}}.
		\end{equation*}
		
		Hence, also from \eqref{4.30} we immediately obtain that $\|\widehat k_\eps |D^s  u_{\eps}|^2\|_{L^1(\R^d)} \le C$, and \eqref{4.32} follows from the first inequality in \eqref{4.34}.
		
		As a consequence, \eqref{4.33}  now easily follows from \eqref{4.32}:
		\begin{align*}
			\|\widehat k_\eps  D^s   u_{\eps}\|_{\bs L^\infty(\R^d)'}&=\sup_{
				\|\bs\xi\|_{\bs L^\infty(\R^d)}\le1}\bigg|\int_{\R^d}\ked D^s  u_{\eps}\cdot\bs \xi\bigg|\\
			&\leq \int_{\R^d}\ked^{\frac{1}{2}}\ked^{\frac{1}{2}}|D^s  u_{\eps}|\leq \|\ked\|_{L^1(\R^d)}^{\frac{1}{2}}	\|\ked|D^s  u_{\eps}|^2\|_{L^1(\R^d)}^{\frac{1}{2}}.
		\end{align*}
		
		By compactness, from the estimates \eqref{4.31}, \eqref{4.32} and \eqref{4.33}, using the Rellich-Kondrachov  and the  Banach-Alaoglu-Bourbaki theorems, there exist $u\in\Lambda^{s,r}_0(\Omega)\cap C^{0,\gamma}(\overline{\Omega})$, $\lambda\in L^\infty(\R^d)'$ and $\bs\Psi\in\bs L^\infty(\R^d)'$ and a generalised sequence $\eps\rightarrow0$ such that
		\begin{align*}
			D^su_\eps\underset{\eps}\lraup D^su \text{ in } \ \bs L^r(\R^d)\text{-weak},\quad
			u_\eps\underset{\eps}\longrightarrow u \text{ in } \  C^{0,\gamma}(\overline{\Omega}) \text{ strong},\label{4.36}\\
			\ked\underset{\eps}\lraup \lambda \text{ in } \  L^\infty(\R^d)'\text{-weak}^*,\quad
			\ked D^su_\eps\underset{s}\lraup \bs\Psi\ \text{ in } \ \bs L^\infty(\R^d)'\text{-weak}^*.
		\end{align*}
		
		Now, arguing as in the proof of Theorem \ref{teo4.2}, we show that $u\in\K^s_g$, $\bs\Psi=\lambda D^su$ and $(u,\lambda)$ satisfies \eqref{ml}.
		
		Firstly we conclude that $|D^su|\leq g$ a.e. in $\R^d$, from
		\begin{align*}
			\int_{\R^d}(|D^su |-g)^+&\le\varliminf_{\eps\rightarrow 0}	\int_{\R^d}(|D^s   u_\eps|-g-\sqrt\eps)^+
			=\varliminf_{\eps\rightarrow 0}	\int_{V\eps}(|D^su_\eps|-g-\sqrt\eps)\\
			&\leq\varliminf_{\eps\rightarrow 0}	\int_{V\eps}|D^s   u_\eps|
			\leq\varliminf_{\eps\rightarrow 0}\|D^s   u_\eps\|_{L^r(\R^d)}|V_\eps|^{\frac{1}{r'}}=0,
		\end{align*}
		since $\xi\mapsto (|\xi|-g)^+$ is a convex lower semicontinuous function and $V_\eps$, defined in \eqref{UepsVeps}, has vanishing measure as $\eps\rightarrow 0$ by \eqref{veps}.
		
		Observing that
		\begin{equation*}
			\bigg|\int_{\R^d}|D^su_\eps|^{q-2}D^su_\eps\cdot D^sv\bigg|\leq \|D^su_\eps\|^r_{\bs L^r(\R^d)}\|D^s v\|_{\bs L^\infty(\R^r)},
		\end{equation*}
		taking the generalised limit in \eqref{4.24} with an arbitrarily $v\in \Lambda_0^{s,\infty}(\Omega)\subset \Lambda_0^{s,q}(\Omega)$, we obtain
		\begin{equation}
			\label{4.38}
			\bsl \bs\Psi,D^sv\bsr =[F,v]-\mathscr{L}^s(u,v),\quad \forall v\in \Lambda_0^{s,\infty}(\Omega),
		\end{equation} 
		and, since $\ked\geq 0$ implies $\lambda\geq 0$, it remains to show that $\bs\Psi=\lambda D^su$ and $\lambda|D^su|=\lambda g$.
		
		Taking $v=u_\eps$ in \eqref{prob_ap} and using \eqref{4.38} and the semicontinuity \eqref{4.21} for $u_\eps\underset{\eps}{\lraup}u$ in $\Lambda^{s,r}_0(\Omega)$, we easily obtain 
		\begin{equation*}
			\varlimsup_{\eps\rightarrow 0}\int_{\R^d}\ked|D^su_\eps|^2\leq \bsl\bs\Psi,D^su\bsr.
		\end{equation*}
		
		Comparing the inequality
		\begin{align}	\label{4.39}
			\begin{aligned}
				0 & \leq 	\varlimsup_{\eps\rightarrow 0}\int_{\R^d}\ked|D^s(u_\eps-u)|^2\\
				& =
				\varlimsup_{\eps\rightarrow 0}\bigg(\int_{\R^d}\ked|D^su_\eps|^2-2\int_{\R^d}\ked D^su_\eps\cdot D^su+\int_{\R^d}\ked|D^su|^2\bigg)	\\
				& \leq \bsl\bs\Psi,D^su\bsr-2 \lim_{\eps\rightarrow 0}\bsl\ked D^su_\eps,D^su\bsr+\lim_{\eps\rightarrow 0}\langle \ked,|D^su|^2\rangle\\
				& \leq \langle \lambda,|D^su|^2\rangle-\bsl\bs\Psi,D^su\bsr,
			\end{aligned}
		\end{align}
		with (recall that $\ked g^2\leq \ked|D^su_\eps|^2$ by definition of $\ked$)	
		\begin{equation*}
			\langle\lambda,|D^su|^2\rangle\leq \langle\lambda,g^2\rangle=
			\lim_{\eps\rightarrow 0}\int_{\R^d}\ked\,g^2	\leq \varlimsup_{\eps\rightarrow 0}\int_{\R^d}\ked|D^su_\eps|^2\leq \bsl\bs\Psi,D^su\bsr,
		\end{equation*}
		we conclude that
		\begin{equation*}
			\langle\lambda,|D^su|^2\rangle= \langle\lambda,g^2\rangle=
			\varlimsup_{\eps\rightarrow 0}\int_{\R^d}\ked|D^su_\eps|^2= \bsl\bs\Psi,D^su\bsr
		\end{equation*}
		and, afterwards from \eqref{4.39}, also
		\begin{equation*}
			\varlimsup_{\eps\rightarrow 0}\int_{\R^d}\ked|D^s(u_\eps-u)|^2=0.
		\end{equation*}
		
		Then $\bs\Psi=\lambda D^su$ since, for an arbitrary $\bs\xi\in\bs L^\infty(\R^d)$,
		\begin{align*}
			|\bsl\bs\Psi-\lambda D^su,\bs\xi\bsr| & =	\lim_{\eps\rightarrow 0}\bigg|\int_{\R^d}\ked D^s(u_\eps-u)\cdot\bs\xi\bigg|\\
			& \leq 
			\varlimsup_{\eps\rightarrow 0}\bigg(\int_{\R^d}\ked|D^s(u_\eps-u)|^2\bigg)^\frac12\|\ked\|_{L^1(\R^d)}^\frac12\|\xi\|_{\bs L^\infty(\R^d)}=0.
		\end{align*}
		
		From \eqref{4.33} it follows $\langle\lambda,|D^su|^2-g^2\rangle=0$ and we conclude, as in \eqref{4.26}, that  
		$$\lambda(|D^su|-g)=0\quad\text{in }L^\infty(\R^d)'.$$
		
		The general case follows by Theorem \ref{teo4.2}, by approximating with solutions of \ref{mlxa}, \ref{mlxb} with data $A_\nu$, $\bs b_\nu$, $\bs d_\nu$, $c$, $f_\#$, $\bs f_\nu$ and $g$ satisfying \eqref{matrix}, \eqref{dados_regulares} and \eqref{g_*} and converging strongly in $L^{p_1}$, $L^1$ and $L^{q_1}$,  for instance by using  $\bs f_\nu=\sup(-\frac{1}{\nu},\inf(\frac{1}{\nu},\bs f))\chi_{B(0,\frac{1}{\nu})}$, where $\chi_{B(0,\frac{1}{\nu})}$ denotes the characteristic function of $B(0,\frac{1}{\nu})$.
		
		Finally, since $u\in\K^s_g$, given $v\in\K^s_g$, taking $v-u$ as test function in \eqref{mla} and noting that
		\begin{align*}
			\bsl\lambda D^su,D^s(v-u)\bsr&=\langle\lambda,D^su\cdot D^s(v-u)\rangle=\langle\lambda, D^su\cdot D^sv-|D^su|^2\rangle\\
			& \leq 	\langle\lambda,|D^s u|(|D^sv|-|D^su|)\rangle	\leq 	\langle\lambda,|D^s u|(g-|D^su|)\rangle\\
			&=\langle\lambda(g-|D^su|),|D^su|\rangle=0,
		\end{align*}
		it is clear that $u$ solves the variational inequality \eqref{iv}, which concludes the proof of Theorem \ref{teo4.1}.
	\end{proof}
	
	\begin{remark}
		Theorem \ref{teo4.2}, as a weak continuous dependence result, generalises Theorem \ref{teo3.5} to the case of degenerate operators, including the case $A\equiv 0$, with $L^1$-data. In fact, if $A$ satisfies \eqref{matrix} and \eqref{assumptions1} holds with the strictly coercive assumption \eqref{coercive}, it is clear that $u$ solving problem \eqref{ml} is unique. On the other hand, the uniqueness of $\lambda$ is an open problem, even in the local case of $s=1$, which was considered first for the Laplacian in \cite{AzevedoSantos2017} in the special case $f_\#\in L^2(\Omega)$ and $\bs f=0$.
	\end{remark}
	\begin{remark}\label{local_local_4}
		As in Section 3, we may assume in Theorems \ref{teo4.1} and \ref{teo4.2} that $g$ and $g_\nu$ satisfy \eqref{gweak} instead 
		\eqref{g_*}, with uniform limit in $\nu$.
	\end{remark}
	
	\section{Localisation of transport densities as $s\rightarrow 1$}
	\label{section5}
	
	In order to consider the generalised convergence of the fractional problem to the local one as $s\rightarrow 1$, for $0<\sigma <s\leq 1$, with $\sigma$ fixed, we consider $(u_s,\lambda_s)\in \Lambda^{s,\infty}_0(\Omega)\times \bs L^\infty(\R^d)'$ such that
	\refstepcounter{equation}
	\begin{align*}
		&\mathscr L^s(u_s,v)+\bsl\lambda_s D^su_s,D^s v\bsr=[F,v]_s,\quad \forall v\in\Lambda^{s,\infty}_0(\Omega)
		\label{5.1.s}	\tag*{(\theequation)$_{\slab}$}\\
		&			\refstepcounter{equation}
		|D^su_s|\le g_s\text{ a.e. in }\R^d,\quad\lambda_s\ge0\text{ and }\lambda_s(|D^su_s|-g_s)=0\ \text{ in }L^\infty(\R^d)', \label{5.2.s}	\tag*{(\theequation)$_{\slab}$}
	\end{align*}
	with the convention $s=1$ corresponds to the local problem $(u,\lambda)\in W^{1,\infty}_0(\Omega)\times L^\infty(\Omega)$, where $D^1=D$ is the classical gradient in the definitions of $\mathscr L$ and $[F,\cdot]_s$ given by \eqref{bilinear} and \eqref{fs}, respectively, and  {\renewcommand\slab{1}\ref{5.2.s}} holds only in $\Omega$.
	
	Here we can also allow a variable threshold $g_s$ under the assumption
	\begin{equation}\label{5.3}
		0<g_*\leq g_s(x)\leq g^*,\quad\text{a.e. $x\in\R^d$, $\sigma<s\leq 1$},
	\end{equation}
	and such that
	\begin{equation}
		\label{5.4}
		g_s\longrightarrow g_1\quad\text{ in }\ L^\infty(\R^d)\ \text{ as } s\rightarrow 1. 
	\end{equation}
	
	The corresponding variational inequality \eqref{iv} now reads as follows
	\refstepcounter{equation}
	\begin{equation}\label{5.5s}
		\tag*{(\theequation)$_{\slab}$}
		u_s\in\K_{g_s}^s:\ \mathscr L^s(u_s,v-u_s)\geq [F,v-u_s]_s,\quad \forall v\in\K_{g_s}^s,
	\end{equation}	
	where the convex set $\K_{g_s}$ is defined by \eqref{kg} with $g_s$, $\sigma <s\leq 1$.
	
	Now we can state the localisation theorem for \ref{5.1.s}-\ref{5.2.s} as $s\rightarrow 1$, which is essentially a variant of the generalised continuous dependence property of Theorem \ref{teo4.2} with the additional difficulty on the variable spaces $\Lambda^{s,\infty}_0(\Omega)$, with $s$, $\sigma<s<1$. For $\zeta\in L^\infty(\R^d)'$, we denote its restriction to $\Omega\subset\R^d$ by $\zeta_\Omega\in L^\infty(\Omega)'$, defined by 
	\begin{equation*}
		\langle \zeta_\Omega,\varphi\rangle=\langle\zeta,\widetilde{\varphi}\rangle,\quad\forall \varphi\in L^\infty(\Omega),
	\end{equation*}
	where $\widetilde{\varphi}$ is the extension of $\varphi$ by zero to $\R^d\setminus\Omega$.
	\begin{theorem}\label{teo5.1}
		For any  $0<\sigma<1$, let $(u_s,\lambda_s)\in\Lambda^{s,\infty}_0(\Omega)\times L^\infty(\R^d)'$ solve {\em \ref{5.1.s}-\ref{5.2.s}} for any $s$, $0<\sigma<s<1$, under the assumptions \eqref{matrix}, \eqref{assumptions2}, \eqref{assumptions_s2} and \eqref{5.3}, \eqref{5.4}. Then, there is a generalised sequence denoted by $s\rightarrow 1$, such that, for any $0<\alpha<1 $,
		\begin{align*}
			&	u_s\underset{s}\rightarrow u\ \text{ in } \ \Lambda^{\sigma,p}_0(\Omega)\cap C^{0,\alpha}(\overline{\Omega}),\\
			&	D^su_s\underset{s}\lraup Du\ \text{ in } \ \bs L^\infty(\R^d)\text{-weak}^*,\\
			&		(\lambda_s)_\Omega\underset{s}\lraup \lambda \text{ in } \  L^\infty(\Omega)'\text{-weak}^*,
		\end{align*}
		where $(u,\lambda)\in W^{1,\infty}_0(\Omega)\cap L^\infty(\Omega)'$ is a solution to the local problem {\em {\renewcommand\slab{1}\ref{5.1.s}}-{\renewcommand\slab{1}\ref{5.2.s}}}, in $\Omega$.
	\end{theorem}
	
	\begin{proof}
		We adapt the steps of the proof of Theorem \ref{teo4.2}: i) {\em a priori} estimates with respect to $s$; ii) existence of limits of generalised sequences, by compactness, and iii) characterization of those limits as solutions of the local
		problem  {\renewcommand\slab{1}\ref{5.1.s}}- {\renewcommand\slab{1}\ref{5.2.s}}.		 	
		For $0<\sigma<s<1$, using the Poincar\'e inequality \eqref{PoincareSobolev}, we have $\frac{\sigma}{C_0}\|u_s\|_{L^\infty(\Omega)}\leq \|D^su_s\|_{\bs L^\infty(\R^d)}$. Then by the assumption \eqref{5.3} we obtain for any $u_s\in\K^s_{g_s}$ solution of \ref{5.1.s}-\ref{5.2.s}, we get that
		\begin{align}
			\begin{aligned}\label{5.9}
				&  C^{-1}_{p,\infty}\|D^su_s\|_{\bs L^p(\R^d)}\leq \|D^su_s\|_{\bs L^\infty(\R^d)}\leq g^*,\ 1\leq p<\infty,\\
				& \|u_s\|_{   C^{0,\beta}(\overline{\Omega})}\leq C_\beta,\ 
				\text{for any $\beta$, $0<\beta<s<1$}, 
			\end{aligned}
		\end{align}
		where the constant $C_\beta$ is independent of $s$.
		
		Letting
		$ \bs\Psi_s=\lambda_s D^s u_s$, arguing exactly as in \eqref{4.11}-\eqref{4.12} and \eqref{4.13}, by replacing the label $\nu$ by $s$, we obtain that
		\begin{align*}
			\|\lambda_s\|_{L^\infty(\R^d)}'\leq \tfrac{C_1}{g_*^2} \quad\text{and}\quad 
			\|\bs\Psi_s\|_{\bs L^\infty(\R^d)}'&\leq \tfrac{C_1 g^*}{g_*^2},
		\end{align*}
		where $C_1>0$ is a constant independent of $s$, $\sigma<s<1$
		
		Therefore, by compactness, in particular, by \eqref{5.9} and \eqref{camposembedding}, there are $u\in C^{0,\alpha}(\overline{\Omega})\cap\Lambda^{\sigma,p}(\Omega)$, for $0<\alpha<\beta$, $0<\sigma<1$ and $1<p<\infty$, $\bs\chi\in\bs L^\infty(\R^d)$, $\widetilde{\lambda}
		\in L^\infty(\R^d)'$, $\bs\Psi\in\bs L^\infty(\R^d)'$ and a generalised sequence $s\rightarrow 1$, such that
		\begin{align*}
			&	u_s\underset{s}\longrightarrow u\ \text{ in } \ \Lambda^{\sigma,p}_0(\Omega)\cap C^{0,\alpha}(\overline{\Omega}),\\
			&	D^su_s\underset{s}\lraup \bs\chi\ \text{ in } \ \bs L^\infty(\R^d)\text{-weak}^*,\\
			&	(\lambda_s)_\Omega\underset{s}\lraup \lambda \text{ in } \  L^\infty(\Omega)'\text{-weak}^*,\\
			&	\bs\Psi_s\underset{s}\lraup \bs\Psi\ \text{ in } \ \bs L^\infty(\R^d)\text{-weak}^*.
		\end{align*}
		
		Letting by $\widetilde{u}$ be the extension of $u$ by zero to $\R^d$, and applying Corollary \ref{2.3} componentwise to an arbitrarily $\bs\varphi\in C^\infty_c(\R^d)^d$, we have, by \eqref{wDs},
		\begin{equation*}
			\int_{\R^d}\bs \chi\cdot \bs\varphi =\lim_s\int_{\R^d}D^su^s\cdot\bs\varphi=-\lim_s\int_{\R^d}\widetilde{u}^sD^s\cdot \bs\varphi =-\int_{\R^d}\widetilde{u}D\cdot\bs\varphi,
		\end{equation*} 
		which means that $\bs\chi=D\widetilde{u}\in\bs L^\infty(\R^d)$ and $u\in W^{1,\infty}_0(\Omega)$, and therefore $D\widetilde{u}=\widetilde{Du}$.
		
		Arguing as in \eqref{pertence_convexo} with an arbitrarily measurable subset $w\subset\R^d$ with finite measure and with $\bs \xi=\frac{Du}{|Du|}\chi_V$, $V=w\cap\{|Du|\neq 0\}$, we obtain
		\begin{equation*}
			\int_w|D\widetilde{u}|=\int_{\R^d}D\widetilde{u}\cdot\bs\xi=\lim_s \int_{\R^d}D^s\widetilde{u}_s\cdot\bs\xi\leq \varlimsup_s\int_w|D^su_s|\leq\lim_s\int_wg_s=\int_wg_1
		\end{equation*}
		and so $|Du|\leq g_1$ a.e. in $\Omega$, i.e. $u\in \K_{g_1}$.
		
		Observe that we still have the lower semicontinuity property
		\begin{equation}
			\label{5.18}
			\varliminf_s\mathscr{L}^s(u_s,u_s)\geq \mathscr{L}^1(u,u)
		\end{equation}
		as we easily see by using \eqref{matrix} and taking $\varliminf_s$ in 
		\begin{equation*}
			\int_{\R^d}AD^su_s\cdot D^su_s\geq \int_{\R^d}AD^su_s\cdot D\widetilde{u}+\int_{\R^d}AD\widetilde{u}\cdot D^su_s-\int_{\R^d}AD\widetilde{u}\cdot D\widetilde{u}.
		\end{equation*}
		
		On the other hand, recalling \eqref{1sinfty}, we have $C^\infty_c(\Omega)\subset W^{1,\infty}_0(\Omega)\subset \Lambda^{s,\infty}_0(\Omega)$. Taking the limit $s\rightarrow 1$ in \ref{5.1.s} with $v\in C^\infty_c(\Omega)$ we get 
		\begin{equation}
			\label{5.19}
			\mathscr{L}^1(u,v)+\bsl \bs\Psi_\Omega,Dv\bsr=[F,v]_1.
		\end{equation}
		
		Since for each $v\in W^{1,\infty}_0(\Omega)$ we may take a sequence $v_n\in C^\infty_0(\Omega)$ such that $v_n\underset{n}{\longrightarrow}v$ in $H^1_0(\Omega)$ with $Dv_n\underset{n}{\longrightarrow}Dv$ in $\bs L^\infty(\Omega)\text{-weak}^*$, the equation \eqref{5.19} also holds for any $v\in W^{1,\infty}_0(\Omega)$, as $\bs\Psi_\Omega\in\bs L^\infty(\Omega)'$. So {\renewcommand\slab{1}\ref{5.1.s}} will follows if we show $\bs\Psi_\Omega=\lambda Du$, with $\lambda=\widetilde{\lambda}_\Omega$, which can be done exactly as in the proof of \eqref{psilambda}, by replacing the subscript $\nu$ by $s$ in \eqref{4.20} and in \eqref{4.24}.
		
		Similarly, the corresponding limit \eqref{4.25} as $s\rightarrow 1$ implies $\langle\lambda(g^2-|Du|^2) ,1\rangle=0$ and the same argument of \eqref{4.26} yields $\lambda(g-|Du|)=0$ in $L^\infty(\Omega)'$, showing that $(u,\lambda)$ also satisfies {\renewcommand\slab{1}\ref{5.2.s}}  in $\Omega$.
	\end{proof}
	\begin{remark}
		In the coercive case, i.e., if \eqref{Aelliptic} and \eqref{coercive} hold, it is clear that $u_s$ and $u_1$ are also the unique solutions of the respective variational inequalities \eqref{iv}, with $s\leq 1$. In this case, in particular, with $c=f_\#=0$ and $\bs b=\bs d=\bs 0$, the result was given in \cite{AzevedoRodriguesSantos2022} only with the convergence $u_s\underset{s}{\longrightarrow}u$ in $H^\sigma_0(\Omega)$, $0< \sigma<1$.
	\end{remark}
	
	\begin{remark}
		Under the assumptions of Theorem \ref{teo3.5} it is easy to obtain the estimates, similarly to \eqref{estimate},
		\begin{equation*}
			\|u_s\|_{H^s_0(\Omega)}\leq \tfrac{C_*}{\delta}\|f_*\|_{L^{2^\#}(\Omega)}+\tfrac{1}{\delta}\|\bs f\|_{\bs L^2(\R^d)}
		\end{equation*}
		and, denoting $\Gamma^s\in \partial I_{\K^s_{g_s}(u_s)}$, as in Remark \ref{rem3.4}, it is easy to conclude that $\Gamma^s$ is also uniformly bounded in $H^{-s}(\Omega)$, independently of $\sigma<s<1$. This allows us to take subsequences $u_s\underset{s}{\longrightarrow} u$ in $H^\sigma_0(\Omega)$ and $\Gamma^s\underset{s}{\longrightarrow}\Gamma$ in $H^{-\sigma}(\Omega)$, $\forall \sigma <1$, with $u\in H^1_0(\Omega)$, $\Gamma\in H^{-1}(\Omega)$ satisfying the local problem $s=1$.
	\end{remark}
	
	\begin{remark}
		As in Remark \ref{local_local_4},  in Theorem \ref{teo5.1} we may replace the assumptions on $g_s$ by the weaker assumption $g_s\in L^\infty_{loc}(\R^d)$, with positive lower bound in any compact set, and $\displaystyle\lim_{|x|\rightarrow\infty}g_s(x)|x|^{d+s}=\infty$ uniformly in $s$.
	\end{remark}

	%------
	% Insert acknowledgments and information
	% regarding funding at the end of the last
	% section, i.e., right before the bibliography.
	%------
	
	%\begin{ack}
	%
	%\end{ack}
	
	\section*{Acknowledgements}
	The authors wish to thank the referee's careful reading of this manuscript and his suggestions, which allowed the improvement of its final redaction. 
	
	The research of J. F. Rodrigues was partially done under the framework of the Project PTDC/MATPU/28686/2017 at CMAFcIO/ULisboa and A. Azevedo and L. Santos were partially financed by Portuguese Funds
	through FCT (Funda\c c\~ao para a Ci\^encia e a Tecnologia) within the Projects UIDB/00013/2020 and UIDP/00013/2020.
	
	%------
	% Insert the bibliography.
	%------

\end{document}